\def\g{\gamma}
\def\E{\textsf{E}}
\def\cS{{\mathcal S}}
\def\P{\textsf{P}}
\def\D{{\Delta}}
\def\G{{\Gamma}}
\def\b{\beta}
\def\a{\alpha} 
\newtheorem{theorem}{Theorem}
\newtheorem{lemma}[theorem]{Lemma}
\newtheorem{corollary}[theorem]{Corollary}
\newtheorem{proposition}[theorem]{Proposition}
\begin{document}

\title[Greek letters in staircase tableaux]{Greek letters in  random  staircase tableaux{*}}

 \author[Sandrine Dasse--Hartaut]{Sandrine Dasse--Hartaut$\dagger$}\thanks{$\dagger$ LIAFA,
Universit\'e Paris Diderot--Paris 7,
F-75205 Paris, France, dasse@liafa.jussieu.fr} \author[Pawe{\l} Hitczenko]{Pawe{\l} Hitczenko$\ddagger$}\thanks{$\ddagger$ Institute of Mathematics, Polish Academy of Sciences, ul.~\'Sniadeckich~8,  00-956 Warszawa, Poland and  Department of Mathematics and Information Science, Warsaw University of Technology, Pl. Politechniki~1, 00-661 Warszawa, Poland, phitczenko@math.drexel.edu}
\thanks{{*} The work of the first author was carried out while she  held an ANR Gamma internship at LIPN, Universit\'e Paris Nord, under the direction of Fr\'ed\'erique Bassino (LIPN)  and Sylvie Corteel (LIAFA). She would like to thank both of them for their guidance,   the members of LIPN for their hospitality, and ANR Gamma for the support.  
The second author was partially supported  by NSA Grant \#H98230-09-1-0062. Most of his work was done during his stay at LIPN in July 2010 and completed while he was at the Institute of Mathematics of the Polish Academy of Sciences and the Technical University of Warsaw in the Fall of 2010. 
He would like to thank Fr\'ed\'erique Bassino  for the invitation and  acknowledge the hospitality of these institutions}

\maketitle

\begin{abstract}
In this paper we study a relatively new combinatorial object called staircase tableaux. 
Staircase tableaux were introduced by Corteel and Williams in the connection with Asymmetric Exclusion Process and has since found interesting connections with Askey--Wilson polynomials. We develop a probabilistic approach that allows us to analyze several parameters of a randomly chosen staircase tableau of a given size. In particular, we obtain limiting distributions for  statistics associated with appearances of Greek letters in staircase tableaux.
 \end{abstract}

\vspace{.8cm}

\noindent {\em Key words and phrases:} asymmetric exclusion process, asymptotic normality, staircase tableaux.

\vspace{.8cm}  

\section{introduction}
An interesting combinatorial structure, called staircase tableaux, was introduced in  recent work of Corteel and Williams \cite{cw_pnas, cw}. 
Staircase tableaux are related to the asymmetric exclusion process on an one-dimensional lattice with open boundaries, the ASEP. This is an important and heavily studied particle model in statistical mechanics (we refer to \cite{cw} for some background information on several versions of that model and their  applications and connections to other branches of science). The study of the generating function of the staircase tableau has given a combinatorial  formula for the steady state probability of the ASEP.  Explicit expressions for the steady state probabilities were first given in \cite{Derrida1}. In their work  \cite{cw,cw_pnas} Corteel and Williams used staircase tableaux  to give a combinatorial formula for the moments of the (weight function
of the) Askey-Wilson polynomials; for a follow--up work see \cite{cssw}.

The authors of \cite{cw} called  for further investigation of the staircase tableaux because of their combinatorial interest and their potential connection to geometry.  In this paper we take up that issue and  study some basic properties of staircase tableaux. More precisely, we analyze the distribution of various parameters associated with appearances of Greek  letters $\a$, $\b$, $\delta$, and $\g$ in randomly chosen staircase tableau of size $n$ (see the next section or, e.g. \cite[Section~2]{cw},  for the definitions and  the meaning of these symbols).

Staircase tableaux are generalizations of permutation tableaux (see e.g. \cite{ch,CW1,CW2,hj} and references therein for more information on these objects and their connection to a version of ASEP referred to as the partially asymmetric exclusion process; PASEP). For permutation  tableaux, the authors of \cite{ch} developed a probabilistic approach that later allowed the derivation of the limiting (and even exact) distributions of various parameters  of the permutation tableaux. Our goal here is the same: in Section~\ref{setup} we  develop a probabilistic approach parallel to that of \cite{ch} that  allows us to 
compute generating functions of various quantities associated with staircase tableaux (see Corollary~\ref{cor:gfr} in Section~\ref{count} and Proposition~\ref{prop:gfr} in Subsection~\ref{subsec:euler}).  As a consequence, we obtain the exact or limiting distributions of the parameters we study.
  
 Our main results are gathered in Section~\ref{sec:mainres} and may be summarized by stating that the five parameters we consider have asymptotically normal distribution when normalized in a usual way (i.e. centered by the mean and scaled by the square root of the variance). Thus, for example, Theorem~\ref{thm:diag} asserts that the number $A_n$ of $\alpha$ or $\gamma$ on the diagonal of a randomly chosen staircase tableau of size $n$ has expected value $n/2$, variance $(n+1)/12$ and that  
 $(A_n-n/2)/\sqrt{n/12}$ converges in distribution to the standard normal random variable. We refer to  Theorems~\ref{thm:ent} and~\ref{thm:diag} for precise statements. 
 
In Section~\ref{proofs} below (see comments at the beginning of Subsection~\ref{subsec:euler} and a remark at the end of it) we find that  one of the parameters we study  coincides with a generalization of Eulerian numbers (see \cite[sequence A060187]{s}) related to Whitney numbers of Dowling lattices (see \cite[sequences A145901, A039775]{s} and \cite{dow, ben_dm, ben_aam, clark} for definitions and  further information on these numbers). This rather unexpected and intriguing connection has not been explained and merits, perhaps, further studies. One consequence of our work is that the triangle of numbers \cite[sequence A060187]{s}, when suitably normalized, satisfies the central limit theorem. As far as we can tell this result is new (although it is an easy consequence of a general theorem of Bender \cite{b}). Limit theorems for a related sequence \cite[A145901]{s} are established in \cite{clark}. 
 This link to the Whitney numbers of Dowling lattices may have unraveled  connection to geometry alluded to in \cite{cw} as the sequences A060187, A145901, and A039775 from \cite{s} all have a very strong geometrical flavor. 

\section{definitions and notation}

We recall the following concept first introduced in \cite{cw_pnas,cw}: A {\it staircase tableau of size $n$} is a Young diagram of  shape $(n,n-1,\dots,2,1)$ whose boxes are filled according to the following rules:
\begin{itemize}
\item each box is either empty or contains one of the letters $\a$, $\b$, $\delta$, or $\g$;
\item no box on the diagonal is empty;
\item all boxes in the same row and to the left of a $\b$ or a $\delta$ are empty;
\item all boxes in the same column and above an $\a$ or a $\g$ are empty.
\end{itemize}
An example of a staircase tableau  is given in Fig.~\ref{fig:tab}(a).
 \begin{figure}[h]
\centering
\begin{tabular}{ccc}
\includegraphics[width=0.28\textwidth]{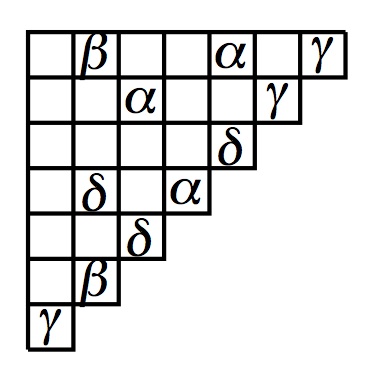}
&
\includegraphics[width=0.3\textwidth]{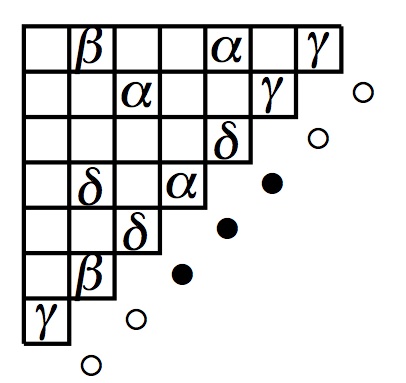}
&
\includegraphics[width=0.29\textwidth]{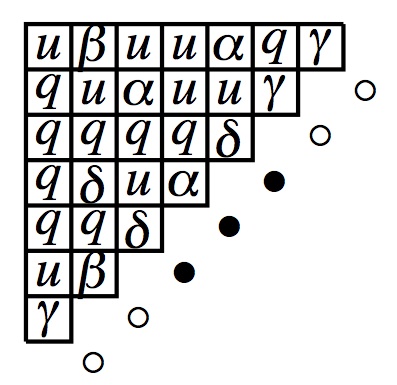}\\(a)&(b)&(c)
\end{tabular}
\caption{(a) A staircase tableau of  size is $7$. Its top row is indexed by $\beta$, the next one by $\a$. (b) Its  type  ($\circ \circ \bullet \bullet \bullet \circ \circ$). (c) The same tableau with $u$'s and $q$'s.}
\label{fig:tab} 
\end{figure}

As we mentioned, staircase (or earlier permutation) tableaux were studied in the connection with ASEP. Because of the importance of this connection we briefly recall its nature.  
The ASEP is a Markov chain on words of size $n$ on an alphabet $A = \{\circ,\bullet\}$ consisting of two letters. Each such word represents an one-dimensional lattice of length $n$ 
 with some sites occupied by  particles  (represented by $\bullet$), and others not (represented by $\circ$). A particle can only hop to the right or the left (with the probabilities $u$ and $q$, respectively), provided that the adjacent site is unoccupied, or enter or quit the lattice. Entering from the left (right) happens with the probability $\a$ (resp., $\delta$) if the first (last) site is unoccupied. Exiting to the left (right) happens with the probability $\g$ (resp., $\b$) if the first (last) site is occupied. At a given time one of the $n+1$ possible locations for a move  is selected (uniformly at random) and, if possible,  a transition described above is performed  with the given probability. We refer to \cite{Derrida, Derrida1, jumping} or \cite{cw} for more detailed description and further references. 

To describe the connection to staircase tableaux, define the {\it type}  of a staircase tableau $S$ of size $n$ to be a word of the same size on the alphabet $\{\circ,\bullet\}$ obtained by reading the
diagonal boxes from northeast (NE) to southwest  (SW) and writing $\bullet$ for each $\alpha$ or $\delta$, and $\circ$ for each $\beta$ or $\gamma$. (Thus a type of a tableau is a possible state for the ASEP.)  Fig.~\ref{fig:tab}(b) shows a tableau with its type. We also need a   \textit{weight} of a tableau $S$. To compute it, we first label  the empty boxes of $S$ with $u$'s and $q$'s as follows: first, we fill all the boxes to the left of a $\beta$ with  $u$'s, and all the boxes to the left of a $\delta$ with  $q$'s. Then, we fill the boxes above an $\alpha$ or a $\delta$ with  $u$'s, and the boxes above a $\beta$ or a $\gamma$ with  $q$'s. When the tableau is filled, its weight, $wt(S)$, is 
a monomial of degree $n(n+1)/2$ in $\alpha$, $\beta$, $\gamma$, $\delta$, $u$ and $q$, which is the product of labels of the boxes of $S$. Fig.~\ref{fig:tab}(c) shows a tableau filled with $u$'s and $q$'s. Its weight is $\a^3\b^2\delta^3\g^3u^8q^9$.

Corteel and  Williams \cite{cw,cw_pnas} have shown that the steady state probability that
the ASEP is in state $\sigma$ is 
\[\frac{Z_{\sigma}(\alpha,\beta,\gamma,\delta,q,u)}{Z_{n}(\alpha,\beta,\gamma,\delta,q,u)},\] where 
$\displaystyle Z_n(\alpha, \beta,\gamma,\delta,q,u) = \sum_{S\textrm{ of size }n } wt(S)$ and  
$\displaystyle Z_{\sigma}(\alpha,\beta,\gamma,\delta,q,u)= \sum_{S\textrm{ of type }\sigma } wt(S)$.

We denote the set of all staircase tableaux of size $n$ by $\cS_n$, $n\ge1$. It is known  that  the cardinality of $\cS_n$ is $4^nn!$. There are several proofs of this fact (c.f. \cite{cssw} for one of them  and for references to further proofs). All these proofs are based on combinatorial approaches and  we wish to mention that a probabilistic technique that we develop in this paper provides a yet another proof of that fact. We present it in Section~\ref{count} below as an illustration of how our method works.

We now define some parameters that are the objects of our study. Let $*$ be a subset of the set of symbols $\{\a,\b,\delta,\g\}$. We say that a row of a staircase tableau is {\it indexed by $*$} if  its leftmost entry is a member of $*$. For the sake of brevity we will refer to rows indexed by $*$ simply as {\it $*$ rows}. Thus, for example, the number of $\a/\g$ rows is the number of rows indexed by $\a$ or $\g$. The tableau in Figure~\ref{fig:tab}(a) has two $\a/\g$ rows, the second from the top (indexed by $\a$) and the bottom (indexed by $\g$). For a given staircase tableau $S\in\cS_n$ we denote this quantity by $r_n(S)$ and we occasionally  skip the subscript $n$ if there is no risk of confusion. As we demonstrate in Section~\ref{setup} below this parameter  plays a fundamental role in our approach. Other parameters we  consider are: the total number of entries $\beta$ or $\delta$ ($\b/\delta$ for short), the total number of entries $\a$ or $\g$ ($\a/\g$), the number of entries $\b/\delta$ on the diagonal of the tableau, and the number of entries $\a/\g$ on the diagonal. For a given tableau $S\in\cS_n$ these parameters will be denoted by $\Delta_n(S)$, $\Gamma_n(S)$, $B_n(S)$, and $A_n(S)$,  respectively.  
We gather the names and the notation we use for the parameters we consider along with their values for a tableau given in Figure~\ref{fig:tab}(a) in the following table 

\vspace{.5cm}
\begin{center}
\begin{tabular}{|c|c|c|}
\hline
parameter&notation&value  in Fig.~\ref{fig:tab}(a)\\
\hline
$\alpha/\gamma$ rows&$r_n$&2\\\hline
total \# of $\beta/\delta$&$\Delta_n$&5\\\hline
total \# of $\alpha/\gamma$&$\Gamma$&6\\\hline
\# of $\beta/\delta$ on diagonal&$B_n$&3\\\hline
\# of $\alpha/\gamma$ on diagonal&$A_n$&4\\\hline
\end{tabular}\\
\end{center}

\vspace{.5cm}
As we mentioned earlier our viewpoint is probabilistic. Thus, we  equip the set $\cS_n$ with the uniform probability measure denoted by $\P_n$. This means that for each $S\in\cS_n$ we have 
\[\P_n(S)=\frac1{4^nn!}.\]\
As is customary we refer to a tableau chosen according to that measure as  a {\it random tableau of size $n$}. We  denote the integration with respect to the measure $\P_n$ 
 by $\E_n$. Our goal is to  analyze  probabilistic properties, like expected values, variances, and exact or limiting distributions, of random variables (or {\it statistics}) $r_n$, $\Delta_n$, $\Gamma_n$, $B_n$, and $A_n$. We  follow the probability theory conventions of \cite{shiryaev}, and the reader is pointed there for any unexplained terms.

\section{Preliminaries and outline of the argument}\label{setup}

In this section we  detail main ideas beyond our approach  and also  derive basic properties of the fundamental parameter, i.e. the number of $\alpha/\gamma$ rows. 

Our method is analogous to what has been done in the case of permutation tableaux (see \cite{ch} or \cite{hj}). Let us recall at this point that permutation tableaux have been used  to give a combinatorial description of a stationary distribution for the 
PASEP. We refer to e.g. \cite{CW1, CW2,ch,hj} for the definition, connections to PASEP, further properties and details.  Just as PASEP is a particular case of ASEP, permutation tableaux of size $n$ are in bijection with a subset of staircase tableaux of that size corresponding to the case $\g=\delta=0$.

The  approach used in \cite{ch,hj} for the permutation tableaux  was to identify a fundamental parameter, trace its evolution as the size of a tableau is increased by 1, and then use successively conditioning to reduce the size of a tableau. We refer the reader to either \cite{ch} or \cite{hj} for more details, here we only recall that this fundamental parameter was the number of unrestricted rows, $U_n$, in a permutation tableau, and that its conditional distribution was given by $1+\operatorname{Bin}(U_{n-1},1/2)$ (here $\operatorname{Bin}(n,p)$ denotes a binomial random variable with parameters $n$ and $p$). This is to mean that if a size of a  permutation tableau with $U_{n-1}$ unrestricted rows was increased from $n-1$ to $n$, then the number of unrestricted rows in this extension had (the conditional) distribution $1+\operatorname{Bin}(U_{n-1},1/2)$.  As it turns out,  in the case of staircase tableaux the role of a fundamental parameter is played by the number of  $\a/\g$ rows. 

To make our approach work we need to know two things. One is the knowledge of the conditional distribution of statistics of interest in extension of a given tableau of size $n-1$. This would allow us to use the so--called tower property of the conditional expectation to reduce the size of staircase tableaux by 1. Roughly speaking, this reducing would amount to relating a function of a statistic on, say, $\mathcal S_n$ to another function of the same statistics on $\mathcal S_{n-1}$. Keeping track of how the functions are related would in principle allow for an iteration of this process to reduce the size of the tableaux to 1. There is, however, another difficulty. Namely, when passing from $\mathcal S_{n}$ to $\mathcal S_{n-1}$, not only a function of statistic changes but also a measure over which we integrate. More specifically, the image of the uniform probability measure $\P_n$  on $\mathcal S_n$ under a natural mapping associated with our reduction does not yield the uniform probability measure $\P_{n-1}$ on $\mathcal S_{n-1}$. To handle this difficulty we need a second ingredient which we refer to as the change of measure. It simply describes the relation between the two probability measures and allows to return to the uniform probability measure on $\mathcal S_{n-1}$ after the reduction (see Section~\ref{sub:ch_of_meas} for more details).     

In the reminder of this section we develop the  details for the number of $\alpha/\gamma$ rows, a key statistic in further considerations. We do  it in two separate subsections. In the first we analyze the evolution  of the number of $\alpha/\gamma$ rows as the size of the tableaux increases.  The key step is the computation of the conditional generating function of  $r_n$ as the size of the tableau is extended from $n-1$ to $n$ (Lemma~\ref{lem:gf_r} below). In the second subsection we discuss the change of measure.

\subsection{The conditional generating function for $r_n$}
Our analysis progresses in steps. We first find the number of extensions of a given tableau of size $n-1$. We then find the conditional distribution of the number of $\alpha/\gamma$ rows. This enables us to compute the conditional generating function of the number of such rows in an extension of a given tableau of size $n-1$. The formula expresses this generating function in terms of the number of $\alpha/\gamma$ rows of the tableau being extended and is of crucial importance for our approach. 

 To begin the analysis we need to  briefly recall the evolution process of staircase tableaux described by Corteel, Stanton, and Williams in \cite{csw}. 
Let $S$ be a staircase tableau of size $n-1$. To extend its size by 1, we add a new column of length $n$ at the left end and we  fill it according to the rules. Any such filling gives a tableau of size $n$ which is an extension of $S$. The number of extensions clearly depends on the number of $\alpha/\gamma$ rows of $S$ and  is as follows:
\begin{lemma}\label{numberofext} Let $S \in \cS_{n-1}$ be a tableau with $r_{n-1}=r_{n-1}(S)$ $\a/\g$ rows. Then there are $4\cdot3^{r_{n-1}}$ different staircase tableaux of size $n$ that are extensions of $S$.
\end{lemma}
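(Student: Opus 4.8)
The plan is to analyze the new leftmost column of length $n$ that must be added to $S$ to obtain a tableau of size $n$, and to count the legal fillings of that column directly. First I would observe that the new column has $n$ boxes, and the box in the top row of the new column is on the diagonal of the size-$n$ tableau, hence cannot be empty; meanwhile the other $n-1$ boxes of the new column sit in the $n-1$ rows of $S$. So the combinatorics splits into two independent contributions: the choice of the diagonal entry in the new column (which, by the rules, can be any one of $\a,\b,\g,\delta$ — giving a factor $4$), and the choice of what to put in each of the $n-1$ remaining boxes of the new column.

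Next I would pin down, row by row, how many legal choices there are for the entry of the new column in that row. The governing constraints are: a $\b$ or $\delta$ forces all boxes to its left in the same row to be empty, and an $\a$ or $\g$ forces all boxes above it in the same column to be empty. Since we are only adding one new column at the very left, the ``to the left'' constraint is automatically vacuous for the new entries, so the only obstruction comes from the column constraint: once we place an $\a$ or $\g$ somewhere in the new column, every box above it in that column must be empty. I would argue that a row of $S$ that is \emph{not} an $\a/\g$ row imposes no new restriction and contributes $3$ possibilities for its box in the new column (it can hold $\a$, $\g$, or nothing — but not $\b$ or $\delta$, because a $\b$ or $\delta$ in that box would make all boxes to its left in that row empty; wait, that is allowed since those boxes are to the left and we are at the left end $\dots$) — this is exactly the point that needs care, so let me phrase it as the main obstacle below. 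The clean statement to aim for is: the bottom-most $\a/\g$ row of $S$ that receives an $\a$ or $\g$ in the new column determines everything above it, and a careful bookkeeping gives the factor $3^{r_{n-1}}$.

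Concretely, I would set up the count as follows. Reading the new column from bottom to top, consider only the positions corresponding to the $r_{n-1}$ rows of $S$ that are $\a/\g$ rows; call the remaining $n-1-r_{n-1}$ positions ``free.'' The claim to establish is that the free positions can be filled in exactly one way each that is consistent with being part of a valid staircase tableau (typically they are forced to be empty, or forced by the row they belong to), contributing a factor $1$, while each of the $r_{n-1}$ special positions independently contributes a factor $3$ — except one has to be careful that the ``all boxes above an $\a$ or $\g$ are empty'' rule couples these positions. The right way to decouple them is to note that in an $\a/\g$ row the box where the $\a$ or $\g$ sits is already to the right, so the new (leftmost) box in that row may be filled with $\a$, $\g$, or left empty, but \emph{not} $\b$ or $\delta$, and the column rule only propagates upward emptiness which is consistent with ``empty'' being one of the three options: so reading bottom-up these really are independent ternary choices. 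Multiplying the diagonal factor $4$ by $3^{r_{n-1}}$ and checking that the free positions contribute nothing gives $4\cdot 3^{r_{n-1}}$.

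The main obstacle I anticipate is precisely this decoupling/independence argument for the column constraint: one must verify that the ``empty above an $\a/\g$'' rule, applied within the single new column, is automatically satisfied by the ternary ``$\a$, $\g$, or empty'' choices in the $\a/\g$-row positions and does not further cut down the count, and simultaneously that the positions in the non-$\a/\g$ rows really are forced (so contribute a factor $1$, not more). This is a finite, local check but it is the crux; once it is in place, the product $4\cdot 3^{r_{n-1}}$ follows immediately, and this matches the Corteel--Stanton--Williams evolution description in \cite{csw}. I would also double-check the small cases $n=1,2$ against the known cardinality $|\cS_n|=4^n n!$ to make sure the recursion $|\cS_n| = \sum_{S\in\cS_{n-1}} 4\cdot 3^{r_{n-1}(S)}$ is consistent, which it is since summing $4\cdot 3^{r_{n-1}}$ over $\cS_{n-1}$ should yield $4n\cdot|\cS_{n-1}|$, i.e. the expected value of $3^{r_{n-1}}$ over $\cS_{n-1}$ must be $n$ — a fact that will itself drop out of the generating-function computation in the next lemma.
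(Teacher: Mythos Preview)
Your plan has the right starting point --- count the legal fillings of the new leftmost column --- but the execution goes astray in two linked ways, and the decoupling you hope for does not actually hold.

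First, a geometry slip: when a leftmost column of length $n$ is adjoined to $S\in\cS_{n-1}$, the new diagonal box is the \emph{bottom} box of that column (it is the sole box of the new bottom row, the SW corner of the size-$n$ tableau), not the top one. The remaining $n-1$ boxes lie \emph{above} it, in the rows of $S$.

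Once the diagonal box sits at the bottom, the ``independent factor $4$'' picture collapses: placing an $\alpha$ or $\gamma$ in that bottom box forces every box above it in the column to be empty, so those two diagonal choices give exactly $2$ extensions in total, not $2\cdot 3^{r_{n-1}}$. The diagonal choice and the rest of the column are genuinely coupled.

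Second, your list of legal entries for an $\alpha/\gamma$-row box is inverted. In such a row the existing leftmost entry is an $\alpha$ or $\gamma$; there is no $\beta/\delta$ in that row of $S$, so nothing prevents the new leftmost box from holding a $\beta$ or $\delta$ --- the ``empty to the left of a $\beta/\delta$'' rule is vacuous at the left edge. What \emph{is} restricted is placing an $\alpha$ or $\gamma$ there, since that forces every box above it in the new column to be empty. So these boxes do not carry independent ternary choices from $\{\alpha,\gamma,\emptyset\}$; the relevant three options are $\{\beta,\delta,\emptyset\}$, and even those are only freely available \emph{below} the topmost $\alpha/\gamma$ in the column.

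The paper's proof handles the coupling head-on: after placing a $\beta/\delta$ at the bottom (two ways), it conditions on the position of the highest $\alpha/\gamma$ (if any) among the $r_{n-1}$ free boxes. If that $\alpha/\gamma$ sits in the $i$th such box from the bottom, the $i-1$ boxes beneath it each take a value in $\{\beta,\delta,\emptyset\}$; if there is no $\alpha/\gamma$ at all, all $r_{n-1}$ boxes do. Together with the two $\alpha/\gamma$-at-the-bottom extensions this yields
\[
2 \;+\; 2\Big(2\sum_{i=1}^{r_{n-1}} 3^{\,i-1} + 3^{r_{n-1}}\Big) \;=\; 4\cdot 3^{r_{n-1}},
\]
so the factor $3^{r_{n-1}}$ arises from a geometric sum rather than from independence. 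Your instinct that the decoupling step is the crux was right; the resolution just goes the other way.
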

\begin{proof}  We consider two cases:

\noindent{\bf 1.}  If $r_{n-1}=0$ then all $n-1$ rows of $S$ are $\b/\delta$ rows and hence the top $n-1$ boxes of the new column have to be  empty since no entries are allowed to the left of a $\b/\delta$ in the same row. 
Thus, if $r_{n-1}=0$ we obtain four different tableaux of size $n$ by putting one of the four symbols in the bottom box of the new column. 

 \begin{figure}[h]
\centering
\begin{tabular}{ccc}
\includegraphics[width=0.3\textwidth]{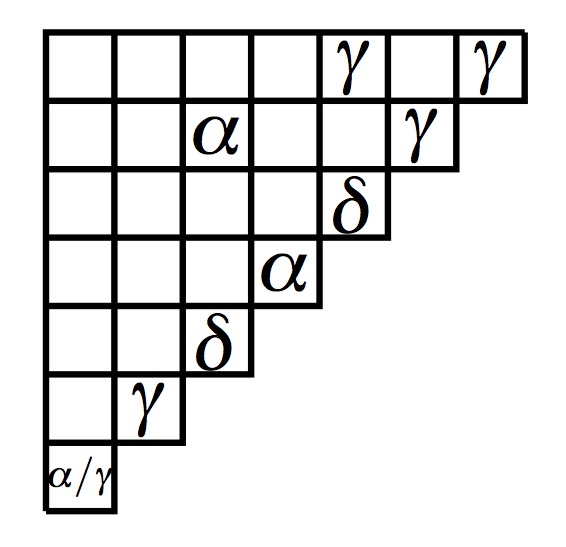}
&\hspace{2.5cm}
\includegraphics[width=0.3\textwidth]{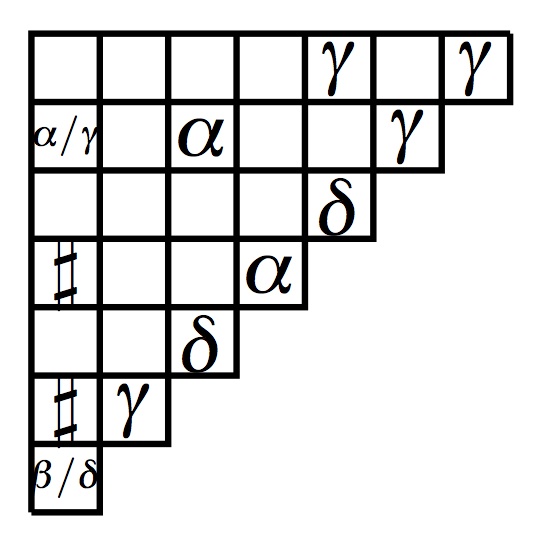}
\\(a)&\hspace{2.5cm}(b)
\end{tabular}
\caption{Extension of a tableau of size 6: (a)  $\a/\g$ is put in the bottom corner of the 7th column. All boxes above it are to be empty. (b) $\b/\delta$ is put in the bottom box and $\a/\g$ in the 3rd of the four $\a/\g$ rows in the 7th column. A box in an $\a/\g$ row above is to be empty; $\sharp$'s in two $\a/\g$ rows below it indicate that these boxes can be filled with $\b/\delta$ or left empty.}
\label{fig:tab_ext} 
\end{figure}

\noindent{\bf 2.} If $r_{n-1}\ge1$ we count the number of extensions as follows (see Figure~\ref{fig:tab_ext} for an illustration):  we can either put one of the symbols $\a/\g$ in the bottom corner (and then we are forced to leave all other boxes in that column empty), or we can put a $\b/\delta$ in the bottom box of the new column. In the latter case, we need to fill the $r_{n-1}$ boxes in the new column corresponding to the  $\a/\g$ rows of $S$. According to the rules, if we put an $\a/\g$ in any of them, then we need to leave all boxes above it empty, otherwise we have a complete freedom.    It follows from these considerations that any  tableau of size $n$ with $r_{n-1}$ $\a/\g$ rows gives $4\cdot3^{r_{n-1}}$ different staircase tableaux of size $n$.   Indeed, if $r_{n-1}=0$ then there are 4 extensions and if $r_{n-1}\ge1$ then there are 
\[2+2(2(1+3+\dots+3^{r_{n-1}-1})+3^{r_{n-1}})\]
extensions. Here, the first 2 is from putting an $\a/\g$ in the bottom box of the new column, the next 2 is from putting a $\b/\delta$ in that box, the term $2\cdot3^{i-1}$, $1\le i\le r_{n-1}$, is from putting the first  $\a/\g$ in a box of the new column corresponding to the  $i$th (counting from bottom) $\a/\g$ row of $S$  as then there are $3^{i-1}$ ways of filling the earlier $i-1$ boxes with symbols $\b$, $\delta$, or leaving them empty, and finally, the $3^{r_{n-1}}$ term comes from not putting an $\a/\g$ in any of the $r_{n-1}$ boxes and thus filling them  with $\b/\delta$'s or leaving them empty.

Summing the above gives
\[2+2\left(2\frac{3^{r_{n-1}}-1}2+3^{r_{n-1}}\right)=2+2(3^{r_{n-1}}-1+3^{r_{n-1}})=4\cdot3^{r_{n-1}},\]
as claimed.
\end{proof}
 
 As the next step we find the conditional distribution of the number of $\alpha/\gamma$ rows over the extensions of a given tableau of size $n-1$. To do that we
 phrase the preceding discussion  in a more probabilistic language using what Shiryaev \cite[Chapter~I, \S8 
]{shiryaev} refers to as  \lq\lq decompositions\rq\rq\  (which is just a special case of conditioning with respect to $\sigma$--algebra).  

Note that every tableau from $\cS_n$ is an extension of a unique tableau $S$ from $\cS_{n-1}$. Therefore,   denoting by $D_S$ the set of all tableaux from $\cS_n$ which are obtained from $S$ by the process described above  (as we just discussed, there are $4\cdot3^{r_{n-1}(S)}$ such tableaux),  we can write
\[\mathcal S_n=\bigcup_{S\in\cS_{n-1}}D_S,\]
where   $(D_{S})_{S\in\cS_{n-1}}$ are pairwise disjoint, non--empty subsets of $\mathcal S_n$.
We denote this decomposition of $\cS_n$ by $\mathcal D_{n-1}$.
We wish  to compute the conditional probabilities  $\P(\ \cdot\ |\mathcal D_{n-1})$ and the conditional expectations  $\E(\ \cdot\ |\mathcal D_{n-1})$  with respect to this decomposition. We have
\begin{lemma}\label{lem:condist} Let $S\in\cS_{n-1}$ be a  tableau with $r_{n-1}=r_{n-1}(S)$ $\a/\g$ rows and let $r$ be the number of such rows in any of its 
 extensions
to a tableau of size $n$. Then 
\begin{equation}\label{plus1}\P(r=r_{n-1}+1|D_{S})=\frac1{2\cdot3^{r_{n-1}}},\end{equation}
and, for $k=0,1,\dots, r_{n-1}$
\begin{equation}\label{minus_k}\P(r=r_{n-1}-k|D_{S})=\frac{2^{k+1}(2{r_{n-1}\choose k+1}+{r_{n-1}\choose k})}{4\cdot3^{r_{n-1}}}.
\end{equation}
\end{lemma}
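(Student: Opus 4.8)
The plan is to carry out the combinatorial count of the extensions, classified by the resulting number of $\a/\g$ rows, directly from the description of the extension process recalled in Lemma~\ref{numberofext}. Recall that to extend a tableau $S$ of size $n-1$ with $r_{n-1}$ $\a/\g$ rows, we adjoin a new leftmost column of length $n$; the top $n-1$ boxes sit opposite the $n-1$ rows of $S$, of which exactly $r_{n-1}$ are $\a/\g$ rows (these are the only ones in which the new box may be nonempty), and the bottom box is always filled. A new row (the bottom one) is an $\a/\g$ row of the extension iff we place an $\a$ or a $\g$ in the bottom box; and one of the old $\a/\g$ rows survives as an $\a/\g$ row of the extension iff its new box is left empty. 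Thus $r$, the number of $\a/\g$ rows of the extension, is determined by which of the $r_{n-1}$ old $\a/\g$ boxes become nonempty, together with whether the bottom box is an $\a/\g$ symbol.

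First I would treat the case $r = r_{n-1}+1$: this forces an $\a$ or $\g$ in the bottom box, which (by the column rule) forces every box above it to be empty, so all $r_{n-1}$ old $\a/\g$ rows survive. There are exactly $2$ such extensions (one for $\a$, one for $\g$), and since $|D_S| = 4\cdot 3^{r_{n-1}}$ by Lemma~\ref{numberofext}, this gives \eqref{plus1}. Next, for $r = r_{n-1}-k$ with $0\le k\le r_{n-1}$, the bottom box must carry a $\b$ or $\delta$ (else $r$ could not drop), giving a factor $2$; and exactly $k$ of the old $\a/\g$ rows must fail to survive, i.e. exactly $k$ of the $r_{n-1}$ old $\a/\g$ boxes are nonempty. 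Here the column rule intervenes: in an $\a/\g$ row, the new box may be filled with $\b$ or $\delta$ or left empty, but if it gets an $\a$ or $\g$ then all boxes strictly above it in that column must be empty — in particular, no old $\a/\g$ row above can then be nonempty. I would split according to whether the topmost nonempty new box among the $\a/\g$ rows is (i) one of the $k$ chosen "killed" boxes filled with $\b/\delta$, or (ii) filled with $\a/\g$. In case (i) all $k$ killed boxes carry a $\b$ or $\delta$, contributing $2^k$, and the remaining $r_{n-1}-k$ surviving boxes are simply empty, so the number of configurations is $\binom{r_{n-1}}{k}2^k$. In case (ii) the highest killed box (there are $\binom{r_{n-1}}{k+1}$ ways to choose $k+1$ positions and designate the top one as the $\a/\g$) carries an $\a$ or $\g$ (factor $2$), forcing all rows above it empty, while the other $k$ killed boxes below carry $\b$ or $\delta$ (factor $2^k$); this contributes $2\binom{r_{n-1}}{k+1}2^k$. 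Adding and multiplying by the $2$ from the bottom box gives numerator $2\cdot 2^k\bigl(\binom{r_{n-1}}{k}+2\binom{r_{n-1}}{k+1}\bigr) = 2^{k+1}\bigl(2\binom{r_{n-1}}{k+1}+\binom{r_{n-1}}{k}\bigr)$, and dividing by $|D_S| = 4\cdot 3^{r_{n-1}}$ yields \eqref{minus_k}.

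The main obstacle — really the only subtle point — is getting the case analysis for the "killed" $\a/\g$ rows exactly right, since the column rule couples the boxes: once an $\a$ or $\g$ is placed, everything above must be empty, which both restricts and simplifies the count. Organizing the enumeration around the \emph{topmost} nonempty new box (empty everywhere above it, prescribed pattern below it) is what makes the binomial bookkeeping clean. As a sanity check I would verify that $\sum_{k=0}^{r_{n-1}}\P(r=r_{n-1}-k|D_S) + \P(r=r_{n-1}+1|D_S) = 1$; this reduces to the identity $2 + \sum_{k=0}^{r_{n-1}}2^{k+1}\bigl(2\binom{r_{n-1}}{k+1}+\binom{r_{n-1}}{k}\bigr) = 4\cdot 3^{r_{n-1}}$, which follows from the binomial theorem applied to $(1+2)^{r_{n-1}}$ after reindexing, exactly mirroring the summation already performed in the proof of Lemma~\ref{numberofext}.
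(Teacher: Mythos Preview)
Your approach is essentially the paper's: split according to whether an $\alpha/\gamma$ is placed among the $r_{n-1}$ allowable boxes, and count. The final numerators are correct. However, there is a conceptual slip in your setup that makes the argument, as written, internally inconsistent.

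You assert that ``one of the old $\alpha/\gamma$ rows survives as an $\alpha/\gamma$ row of the extension iff its new box is left empty.'' This is false: a row is an $\alpha/\gamma$ row when its \emph{leftmost entry} is an $\alpha$ or $\gamma$, so an old $\alpha/\gamma$ row also survives if its new box receives an $\alpha$ or a $\gamma$. Consequently your restatement ``exactly $k$ of the $r_{n-1}$ old $\alpha/\gamma$ boxes are nonempty'' is not the correct characterization of $r=r_{n-1}-k$; the correct one is ``exactly $k$ of those boxes receive a $\beta$ or $\delta$.'' This matters because in your case~(ii) you place an $\alpha/\gamma$ in one box and $\beta/\delta$ in $k$ others, so $k+1$ boxes are nonempty, contradicting your own criterion. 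Moreover you call that $\alpha/\gamma$ box ``the highest killed box,'' but it is not killed at all: it survives, and that is precisely why case~(ii) contributes to $r=r_{n-1}-k$ rather than to $r=r_{n-1}-(k+1)$.

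Once you replace ``nonempty'' by ``contains $\beta/\delta$'' and relabel the $\alpha/\gamma$ box as a surviving (not killed) box, your two cases become exactly the paper's: (i)~no $\alpha/\gamma$ placed, $k$ boxes get $\beta/\delta$, giving $2^k\binom{r_{n-1}}{k}$; (ii)~one $\alpha/\gamma$ placed, necessarily in the topmost of the $k+1$ nonempty boxes, the other $k$ get $\beta/\delta$, giving $2\cdot 2^k\binom{r_{n-1}}{k+1}$. The rest of your write-up (multiplying by $2$ for the bottom box, dividing by $4\cdot 3^{r_{n-1}}$, and the sanity check) is fine.
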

\begin{proof}
Let $S$ and $r$ be as in the statement. Clearly, the possible values for $r$ are $r_{n-1}+1, r_{n-1}, r_{n-1}-1,\dots, 1, 0$ and we need to know the probability of each of these possibilities. First, $r=r_{n-1}+1$ means that we must have placed an $\alpha/\gamma$ in the bottom box of the new column since this is the only way we can increase the number of $\alpha/\gamma$ rows. Consequently, all other boxes in the new column are to remain empty. Obviously there are two such extensions which in view of Lemma~\ref{lem:condist} gives 
\[\P(r=r_{n-1}+1|D_{S})=\frac2{4\cdot3^{r_{n-1}}}=\frac1{2\cdot3^{r_{n-1}}},\]
which justifies \eqref{plus1}.

Next, for $k=0,1,\dots, r_{n-1}$ we compute $\P(r=r_{n-1}-k|D_{S})$.
Since $k$ is the number of $\b/\delta$'s that we put in the $r_{n-1}$ allowable (i.e. corresponding to $\a/\g$ rows of $S$) boxes, $r=r_{n-1}-k$ means that we put a $\b/\delta$ in the bottom box and additional $k$ $\b/\delta$'s in the $r_{n-1}$ allowable boxes above it. If we do not put an $\a/\g$ in any of those $k$ boxes,  we have $2\cdot2^k{r_{n-1}\choose k}$ possibilities (2 is for putting either a $\b$ or a $\delta$ at the bottom, and the rest accounts for putting $\b/\delta$'s in any $k$ of the allowable $r_{n-1}$ boxes). If we do put an $\a/\g$ in one of the boxes we need to pick $k+1$ of the allowable $r_{n-1}$ boxes, put an $\a/\g$ in the topmost of them and put $\b/\delta$'s in the remaining $k$ of them (and at the bottom). This gives  $2^{k+2}{r_{n-1}\choose k+1}$ possibilities. Adding up the two pieces and dividing by the total number of extensions given in Lemma~\ref{lem:condist} we obtain, 
\[\P(r=r_{n-1}-k|D_{S})=\frac{2\cdot2^k{r_{n-1}\choose k}+2^{k+2}{r_{n-1}\choose k+1}}{4\cdot3^{r_{n-1}}}=\frac{2^{k+1}(2{r_{n-1}\choose k+1}+{r_{n-1}\choose k})}{4\cdot3^{r_{n-1}}}
\]
as required.\end{proof}

Lemma~\ref{lem:condist} completely describes the conditional distribution of $r_n$ given $\mathcal D_{n-1}$  and leads, in particular,  to the following basic relation: 
\begin{lemma}\label{lem:gf_r}
For a complex number $z$ and $n\ge1$ (with the understanding that $r_0\equiv0$)
\begin{equation}\label{gf_r}\E(z^{r_n}|\mathcal D_{n-1})=\frac{z+1}2\left(\frac{z+2}3\right)^{r_{n-1}}.\end{equation}
\end{lemma}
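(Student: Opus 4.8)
The plan is to compute the conditional generating function $\E(z^{r_n}\mid\mathcal D_{n-1})$ directly from the conditional distribution of $r_n$ established in Lemma~\ref{lem:condist}, working on a single atom $D_S$ of the decomposition where $r_{n-1}(S)=r_{n-1}$ is fixed. On such an atom the possible values of $r_n$ are $r_{n-1}+1$ and $r_{n-1}-k$ for $k=0,1,\dots,r_{n-1}$, so I would write
\begin{equation}\label{eq:plan_sum}
\E(z^{r_n}\mid D_S)=z^{r_{n-1}+1}\,\P(r=r_{n-1}+1\mid D_S)+\sum_{k=0}^{r_{n-1}}z^{r_{n-1}-k}\,\P(r=r_{n-1}-k\mid D_S),
\end{equation}
and then substitute the two formulas \eqref{plus1} and \eqref{minus_k}. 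The first term contributes $z^{r_{n-1}+1}/(2\cdot 3^{r_{n-1}})$, and the sum contributes $\frac{1}{4\cdot 3^{r_{n-1}}}\sum_{k=0}^{r_{n-1}} z^{r_{n-1}-k}\,2^{k+1}\bigl(2\binom{r_{n-1}}{k+1}+\binom{r_{n-1}}{k}\bigr)$.

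The main work is the binomial-sum evaluation. I would split the sum into the $\binom{r_{n-1}}{k}$ part and the $\binom{r_{n-1}}{k+1}$ part and recognize each as (a piece of) a binomial expansion of $(z+2)^{r_{n-1}}$. Concretely, setting $m=r_{n-1}$, the substitution $j=k$ gives $\sum_{k=0}^{m} z^{m-k}2^k\binom{m}{k}=(z+2)^m$, and the substitution $j=k+1$ in the other part gives $\sum_{k=0}^{m} z^{m-k}2^k\binom{m}{k+1}=\frac1z\sum_{j=1}^{m+1} z^{m+1-j}2^{j-1}\binom{m}{j}=\frac1{2z}\bigl((z+2)^m-z^m\bigr)$ after noting $\binom{m}{m+1}=0$ and pulling the $j=0$ term (which is $z^m$) out of the full expansion $\sum_{j=0}^{m}z^{m-j}2^j\binom mj=(z+2)^m$; here the factor $2^{j-1}$ versus $2^j$ needs to be tracked carefully. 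Assembling: the $\binom{m}{k}$ piece of the sum is $2(z+2)^m$ and the $\binom{m}{k+1}$ piece is $\frac{2}{z}\bigl((z+2)^m-z^m\bigr)$, so the bracketed sum in \eqref{eq:plan_sum} equals $2(z+2)^m+\frac{4}{z}\bigl((z+2)^m-z^m\bigr)$... at which point I would recombine with the $z^{m+1}/(2\cdot 3^m)$ term and simplify.

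After combining over the common denominator $4\cdot 3^m$ and clearing the $z$ in the denominator, I expect everything to collapse to $\frac{(z+1)(z+2)^m}{2\cdot 3^m}$, which is exactly \eqref{gf_r}; the algebra should force the $z^m$ and $z^{m+1}$ terms to cancel against pieces coming from $2(z+2)^m$ and the $\frac4z$-terms, leaving the clean product form. Finally I would note that since $r_{n-1}(S)$ is constant on each atom $D_S$ and the identity holds on every atom with the same right-hand side expression (as a function of $r_{n-1}$), it holds as an identity of conditional-expectation random variables with respect to $\mathcal D_{n-1}$, i.e. \eqref{gf_r}. The base case $n=1$ is handled by the convention $r_0\equiv 0$: then $\mathcal D_0$ is trivial and the formula reads $\E(z^{r_1})=\frac{z+1}{2}$, consistent with the fact that a size-$1$ tableau is a single diagonal box filled by one of $\a,\b,\delta,\g$, of which $\a$ or $\g$ (the two giving an $\a/\g$ row) occur with probability $1/2$. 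The only real obstacle is bookkeeping in the binomial manipulation — keeping the powers of $2$, the shift $k\mapsto k+1$, and the boundary terms $\binom{m}{m+1}=0$ and the $j=0$ term straight — but there is no conceptual difficulty beyond that.
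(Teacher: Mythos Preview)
Your approach is essentially identical to the paper's: both compute $\E(z^{r_n}\mid\mathcal D_{n-1})$ directly from the conditional distribution in Lemma~\ref{lem:condist}, split the sum into the $\binom{r_{n-1}}{k}$ and $\binom{r_{n-1}}{k+1}$ pieces, and recognize each as part of the binomial expansion of $(z+2)^{r_{n-1}}$.

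One arithmetic slip to flag: after the shift $j=k+1$ you should get
\[
\sum_{k=0}^{m} z^{m-k}2^{k}\binom{m}{k+1}
=\sum_{j=1}^{m} z^{m-j+1}2^{j-1}\binom{m}{j}
=\frac{z}{2}\bigl((z+2)^{m}-z^{m}\bigr),
\]
not $\frac{1}{2z}\bigl((z+2)^m-z^m\bigr)$; the extra factor is $z/2$, not $1/(2z)$. With this correction the bracketed sum in \eqref{eq:plan_sum} becomes $2(z+2)^m+2z\bigl((z+2)^m-z^m\bigr)=2(z+1)(z+2)^m-2z^{m+1}$, and after dividing by $4\cdot 3^m$ the $-z^{m+1}/(2\cdot 3^m)$ term cancels exactly against the first term $z^{m+1}/(2\cdot 3^m)$, leaving $\frac{z+1}{2}\bigl(\frac{z+2}{3}\bigr)^{r_{n-1}}$ as you anticipated. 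This is precisely how the paper's computation runs.
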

\begin{proof}
Recall that as we extend a tableau with $r_{n-1}$ $\a/\g$ rows, we have $r_n=r_{n-1}+1$ if and only if we put an $\a/\g$ at the bottom of the new column and $r_n=r_{n-1}-k$ if and only if we put a $\b/\delta$ there and additional $k$ $\b/\delta$'s in the $r_{n-1}$ boxes above it. Therefore, letting $C_*$ denote the event that we put the symbol * at the bottom of the new column we have
\begin{eqnarray*}\E(z^{r_n}|\mathcal D_{n-1})&=&\E(z^{r_n}I_{C_{\a/\g}}|\mathcal D_{n-1})+\E(z^{r_n}I_{C_{\b/\delta}}|\mathcal D_{n-1})\\&=&
z^{r_{n-1}+1}\frac1{2\cdot3^{r_{n-1}}}+\sum_{k=0}^{r_{n-1}}z^{r_{n-1}-k}\P(r_n=r_{n-1}-k|\mathcal D_{n-1})\\&=&
\frac z2\left(\frac z3\right)^{r_{n-1}}+\frac{2}{4\cdot3^{r_{n-1}}}\sum_{k=0}^{r_{n-1}}z^{r_n-k}2^k
\left(2{r_{n-1}\choose k+1}+{r_{n-1}\choose k}\right)
\\&=& 
\frac z2\left(\frac z3\right)^{r_{n-1}}+\frac{1}2\left(\frac{2+z}3\right)^{r_{n-1}}
+\frac{1}{3^{r_{n-1}}}\sum_{k=0}^{r_{n-1}-1}z^{r_n-k}2^k{r_{n-1}\choose k+1}.
\end{eqnarray*}
The last term is
\[\frac{z}{2\cdot3^{r_{n-1}}}\sum_{k=0}^{r_{n-1}-1}2^{k+1}z^{r_n-k-1}{r_{n-1}\choose k+1}
=\frac{z}{2\cdot3^{r_{n-1}}}\left((2+z)^{r_{n-1}}-z^{r_{n-1}}\right),
\]
so that we obtain
\begin{eqnarray*}\E(z^{r_n}|\mathcal D_{n-1})&=&\frac z2\left(\frac z3\right)^{r_{n-1}}+\frac{1}2\left(\frac{2+z}3\right)^{r_{n-1}}
+
\frac z2\left(\frac{2+z}3\right)^{r_{n-1}}-\frac z2\left(\frac z3\right)^{r_{n-1}}\\&=&
\frac{1+z}2\left(\frac{2+z}3\right)^{r_{n-1}}.
\end{eqnarray*}
This proves \eqref{gf_r}. 
\end{proof}
As we already mentioned, Lemma~\ref{lem:gf_r} (or its  versions)  plays a crucial role in our approach. 

\subsection{The change of measure}\label{sub:ch_of_meas}
In this subsection we describe the  second ingredient, namely the change of measure. It it is necessitated by the fact that there are two different probability measures on $\mathcal S_{n-1}$  that naturally appear in our considerations.  The first  is, of course, the uniform measure  $\P_{n-1}$.  We discuss the second one and the relation between them below.

Consider $\mathcal S_{n-1}$, the set of all staircase tableaux of size $n-1$. The second probability measure on $\mathcal S_{n-1}$ is obtained from the uniform probability measure on $\mathcal{S}_n$ by "collapsing" all the elements of $\mathcal{S}_n$  that are extensions of the same element  $S\in\mathcal{S}_{n-1}$. More formally, consider a map $f:\mathcal S_n\to\mathcal S_{n-1}$ defined by $f(T)=S$ if and only if $T$ is an extension of $S$.  The measure of interest is the image of $\P_n$ under $f$. We  denote this measure $\P_n(S)$ (there is an apparent ambiguity of notation here, however, it disappears once we remember whether $S$ is in $\mathcal S_n$ or $\mathcal S_{n-1}$). Since both of these measures appear in the course of our argument, it is important to find the relationship between them.  But this is  straightforward: since a tableau $S\in\cS_{n-1}$ with $r_{n-1}$ $\a/\g$ rows gives $4\cdot 3^{r_{n-1}}$ tableaux in $\cS_{n}$ we have
\begin{equation}\label{rel_prob}\P_n(S)=\frac{4\cdot3^{r_{n-1}}}{|\cS_n|}=\frac{4\cdot3^{r_{n-1}}|\cS_{n-1}|}{|\cS_n|}\frac1{|S_{n-1}|}=4\cdot3^{r_{n-1}}\frac{|\cS_{n-1}|}{|\cS_n|}\P_{n-1}(S).\end{equation}
Consequently, for any random variable $X_{n-1}$ on $\cS_{n-1}$ we have
\begin{equation}\label{rel_exp}\E_nX_{n-1}=\E_{n-1}4\cdot3^{r_{n-1}}\frac{|\cS_{n-1}|}{|\cS_n|}X_{n-1}=4\frac{|\cS_{n-1}|}{|\cS_n|}\E_{n-1}3^{r_{n-1}}X_{n-1}.\end{equation}
Here we have used the same convention as above; for a random variable $X$ on $\mathcal S_{n-1}$, $\E_{n-1}X$ denotes the expectation with respect to the uniform measure on $\mathcal S_{n-1}$ while $\E_nX$ denotes the expectation with respect to the measure that is induced on $\mathcal S_{n-1}$ by the uniform measure on $\mathcal S_n$.
  
The relations \eqref{gf_r} and \eqref{rel_exp} are key and will allow us to analyze the distributions of the various statistics on $\mathcal S_n$.  Note that  \eqref{rel_prob} and \eqref{rel_exp} are true regardless of whether we know  the cardinalities of  $\mathcal S_{n-1}$ and $\mathcal S_n$ or not. As a matter of fact, one can use \eqref{rel_exp} to provide a yet another argument that $|\mathcal S_n|=4^nn!$ as we will see in the next section. 

\section{Generating function for the number of $\a/\g$ rows and some consequences}
\label{count}
In this section we demonstrate  how we intend to apply \eqref{gf_r} and \eqref{rel_exp} to derive recurrences for generating functions that upon solving yield information on the corresponding statistics. We focus here on the number of   $\a/\g$ rows which, on one hand is the easiest to analyse, and on the other is central in the analysis of other statistics.
\begin{proposition}\label{prop:gfr}
For every complex number $z$ we have
\begin{equation}\label{eq:gfr}
E_nz^{r_n}=\frac{2^n}{|\mathcal S_n|}\prod_{k=1}^n(z+2k-1).
\end{equation}
 \end{proposition}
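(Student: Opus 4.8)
The plan is to prove \eqref{eq:gfr} by induction on $n$, using the two key ingredients assembled above: the conditional generating function identity \eqref{gf_r} and the change-of-measure formula \eqref{rel_exp}. The base case $n=1$ is immediate: a staircase tableau of size $1$ is a single diagonal box filled with one of the four symbols, so $r_1$ equals $1$ if that symbol is $\a$ or $\g$ and $0$ otherwise, each happening for two tableaux; hence $\E_1 z^{r_1}=\tfrac{2z+2}{4}=\tfrac{z+1}{2}$, which matches the right-hand side of \eqref{eq:gfr} since $|\cS_1|=4$ and the product $\prod_{k=1}^1(z+2k-1)=z+1$.

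For the inductive step, fix $n\ge 2$ and assume \eqref{eq:gfr} holds for $n-1$. The first step is to compute $\E_n z^{r_n}$ by conditioning on the decomposition $\mathcal D_{n-1}$, i.e. on which tableau of size $n-1$ the random tableau of size $n$ extends. By the tower property,
\begin{equation*}
\E_n z^{r_n}=\E_n\bigl(\E(z^{r_n}\mid\mathcal D_{n-1})\bigr)=\E_n\left(\frac{z+1}{2}\left(\frac{z+2}{3}\right)^{r_{n-1}}\right),
\end{equation*}
where the last equality is exactly Lemma~\ref{lem:gf_r}. Note that the quantity inside the outer expectation is a random variable on $\cS_{n-1}$ (it depends only on $r_{n-1}$), so the outer $\E_n$ is the expectation induced on $\cS_{n-1}$ by the uniform measure on $\cS_n$. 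The second step is to convert this back to the genuine uniform expectation $\E_{n-1}$ via \eqref{rel_exp}: taking $X_{n-1}=\frac{z+1}{2}\left(\frac{z+2}{3}\right)^{r_{n-1}}$,
\begin{equation*}
\E_n z^{r_n}=4\,\frac{|\cS_{n-1}|}{|\cS_n|}\,\E_{n-1}\left(3^{r_{n-1}}\cdot\frac{z+1}{2}\left(\frac{z+2}{3}\right)^{r_{n-1}}\right)=2(z+1)\,\frac{|\cS_{n-1}|}{|\cS_n|}\,\E_{n-1}\bigl((z+2)^{r_{n-1}}\bigr).
\end{equation*}
The third step is to apply the induction hypothesis to $\E_{n-1}\bigl((z+2)^{r_{n-1}}\bigr)$: replacing $z$ by $z+2$ in \eqref{eq:gfr} for $n-1$ gives $\E_{n-1}(z+2)^{r_{n-1}}=\frac{2^{n-1}}{|\cS_{n-1}|}\prod_{k=1}^{n-1}(z+2+2k-1)=\frac{2^{n-1}}{|\cS_{n-1}|}\prod_{k=1}^{n-1}(z+2k+1)$. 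Substituting and cancelling $|\cS_{n-1}|$ yields
\begin{equation*}
\E_n z^{r_n}=\frac{2^n(z+1)}{|\cS_n|}\prod_{k=1}^{n-1}(z+2k+1)=\frac{2^n}{|\cS_n|}\prod_{k=1}^{n}(z+2k-1),
\end{equation*}
where the last equality is just a re-indexing: $(z+1)$ is the $k=1$ factor, and $\prod_{k=1}^{n-1}(z+2k+1)=\prod_{j=2}^{n}(z+2j-1)$. This closes the induction.

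The argument is essentially a bookkeeping exercise, so I do not anticipate a genuine obstacle; the one place that requires care is the interplay between the two meanings of $\E_n$ — the honest expectation on $\cS_n$ versus the induced measure on $\cS_{n-1}$ — and making sure the change-of-measure formula \eqref{rel_exp} is applied to a random variable that depends only on the size-$(n-1)$ tableau (which it does, since $X_{n-1}$ is a function of $r_{n-1}$ alone). A small additional remark worth noting is that one does not even need to know $|\cS_n|=4^n n!$ for this computation: \eqref{eq:gfr} is stated in terms of $|\cS_n|$, and the cardinalities cancel telescopically; indeed, setting $z=1$ in \eqref{eq:gfr} gives $1=\frac{2^n}{|\cS_n|}\prod_{k=1}^n 2k=\frac{2^n\cdot 2^n n!}{|\cS_n|}$, recovering $|\cS_n|=4^n n!$ as a byproduct.
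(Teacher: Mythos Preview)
Your proof is correct and follows essentially the same route as the paper: both arguments derive the one-step recurrence $\E_n z^{r_n}=2(z+1)\frac{|\cS_{n-1}|}{|\cS_n|}\E_{n-1}(z+2)^{r_{n-1}}$ from Lemma~\ref{lem:gf_r} and \eqref{rel_exp}, and then reduce to size $1$. The only cosmetic difference is that the paper unrolls the recurrence by direct iteration down to $\E_1(z+2(n-1))^{r_1}$, whereas you package the same computation as a formal induction; your closing remark recovering $|\cS_n|=4^n n!$ by setting $z=1$ is exactly the paper's Corollary~\ref{cor:count}.
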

 \begin{proof}
By the basic properties of the conditional expectation (see e.g. \cite[Formula~(16), p.~79]{shiryaev}) the expectation on the right--hand side is equal to 
 $\E_n\E(z^{r_n}|\mathcal D_{n-1})$.
Using first \eqref{lem:gf_r} and  then \eqref{rel_exp} we further get
\begin{eqnarray*}\E_nz^{r_n}&=&
\frac{z+1}2\E_n\left(\frac{z+2}3\right)^{r_{n-1}}=\frac{(z+1)}2\frac{4|\mathcal S_{n-1}|}{|\mathcal S_n|}\E_{n-1}3^{r_{n-1}}\left(\frac{z+2}3\right)^{r_{n-1}}\\&=&2(z+1)\frac{|\mathcal S_{n-1}|}{|\mathcal S_n|}E_{n-1}(z+2)^{r_{n-1}}.
\end{eqnarray*}
Applying the same procedure with $z$ replaced by $z+2$ and $n$ by $n-1$ we obtain
\[E_nz^{r_n}=2^2(z+1)(z+3)\frac{|\mathcal S_{n-2}|}{|\mathcal S_n|}E_{n-2}(z+4)^{r_{n-2}}.
\]
Further iteration yields
\[\E_nz^{r_n}=2^{n-1}(z+1)(z+3)\dots(z+2n-3)\frac{|\mathcal S_{1}|}{|\mathcal S_n|}\E_1(z+2(n-1))^{r_1}.\]
Since $|\mathcal S_1|=4$ and  $\E_1(z+2(n-1))^{r_1}=\frac12(z+2(n-1))+\frac12$ the above can be written as
\[\E_nz^{r_n}=\frac{2^n}{|\mathcal S_n|}(z+1)(z+3)\dots(z+2n-1),\]
which is precisely \eqref{eq:gfr}. 
\end{proof}
Proposition~4 has a number of consequences. First, by putting $z=1$ in \eqref{eq:gfr}  we obtain an independent confirmation of the count of staircase tableau of a given size.  
 \begin{corollary}\label{cor:count}
 Let  $\mathcal S_n$ be the set of all staircase tableaux of size $n\ge1$. Then 
\[|\mathcal S_n|=4^nn!
\]
 \end{corollary}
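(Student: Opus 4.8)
The plan is to simply evaluate the generating function identity of Proposition~\ref{prop:gfr} at the point $z=1$. Indeed, setting $z=1$ on the left-hand side of \eqref{eq:gfr} gives $\E_n 1^{r_n} = \E_n 1 = 1$, since $\P_n$ is a probability measure on $\mathcal S_n$ and $r_n$ takes nonnegative integer values (so $1^{r_n}\equiv 1$). Hence the right-hand side must also equal $1$.

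Now I compute the right-hand side at $z=1$. We have
\[
\frac{2^n}{|\mathcal S_n|}\prod_{k=1}^n(1+2k-1)=\frac{2^n}{|\mathcal S_n|}\prod_{k=1}^n 2k=\frac{2^n}{|\mathcal S_n|}\cdot 2^n\prod_{k=1}^n k=\frac{4^n\,n!}{|\mathcal S_n|}.
\]
Setting this equal to $1$ and solving for $|\mathcal S_n|$ yields $|\mathcal S_n|=4^n n!$, which is the assertion of the corollary.

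There is essentially no obstacle here: the corollary is an immediate specialization of the already-established Proposition~\ref{prop:gfr}, and the only thing to check is the elementary product evaluation $\prod_{k=1}^n(2k)=2^n n!$. The one conceptual point worth stating explicitly is that the left-hand side of \eqref{eq:gfr} at $z=1$ equals $1$ precisely because $\E_n$ is expectation under a probability measure; this is what makes the identity informative about the cardinality. (Note that the proof of Proposition~\ref{prop:gfr} itself never used the value of $|\mathcal S_n|$, as remarked after \eqref{rel_exp}, so there is no circularity in deducing the count this way.)
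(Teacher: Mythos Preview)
Your proof is correct and follows exactly the paper's own approach: the corollary is obtained simply by setting $z=1$ in \eqref{eq:gfr}, which forces the left-hand side to equal $1$ and hence $|\mathcal S_n|=4^n n!$. Your additional remark about non-circularity (that Proposition~\ref{prop:gfr} was proved without knowing $|\mathcal S_n|$) is apt and matches the paper's comment following \eqref{rel_exp}.
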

Note that once this corollary is known \eqref{rel_prob} and \eqref{rel_exp} simplify to
\begin{equation}\label{rel_simp}
\P_n(S)=\frac{3^{r_{n-1}}}n\P_{n-1}(S)\quad\mbox{and}\quad \E_nX_{n-1}=\frac1n\E_{n-1}3^{r_{n-1}}X_{n-1},
\end{equation}
respectively, and  this is the form we will be using from now on.

Next, by combining this corollary with \eqref{eq:gfr} we obtain
\begin{corollary}\label{cor:gfr}
The probability generating function of the number of $\alpha/\gamma$ rows in a random staircase tableau of size $n$ is given by
\[\E_nz^{r_n}=\frac{2^n}{4^nn!}\prod_{k=1}^n(z+2k-1)=\prod_{k=1}^n\frac{z+2k-1}{2k}.\]
\end{corollary}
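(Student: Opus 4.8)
The plan is to obtain Corollary~\ref{cor:gfr} as an immediate consequence of Proposition~\ref{prop:gfr} and Corollary~\ref{cor:count}, with essentially no new work required. First I would recall that Proposition~\ref{prop:gfr} gives
\[
\E_n z^{r_n}=\frac{2^n}{|\mathcal S_n|}\prod_{k=1}^n(z+2k-1)
\]
for every complex $z$. The only remaining ingredient is to replace the (so far unspecified) cardinality $|\mathcal S_n|$ by its explicit value $4^n n!$, which is exactly the content of Corollary~\ref{cor:count} (itself deduced from Proposition~\ref{prop:gfr} by setting $z=1$, since the left-hand side then equals $1$ and $\prod_{k=1}^n(2k)=2^n n!$).

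Substituting $|\mathcal S_n|=4^n n!$ into the displayed formula yields
\[
\E_n z^{r_n}=\frac{2^n}{4^n n!}\prod_{k=1}^n(z+2k-1).
\]
The last step is the bookkeeping identity
\[
\frac{2^n}{4^n n!}=\prod_{k=1}^n\frac{1}{2k},
\]
which follows because $4^n n!=\prod_{k=1}^n(4\cdot k)=\prod_{k=1}^n(2\cdot 2k)=2^n\prod_{k=1}^n 2k$, so dividing $2^n$ by this quantity leaves $\prod_{k=1}^n (2k)^{-1}$. Distributing each factor $1/(2k)$ into the product over $k$ then rewrites the expression as $\prod_{k=1}^n\frac{z+2k-1}{2k}$, which is the asserted closed form.

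There is no real obstacle here: the proof is purely a matter of plugging Corollary~\ref{cor:count} into Proposition~\ref{prop:gfr} and simplifying the constant. The one point worth a sentence of care is merely to note that the identity is between polynomials in $z$, so it holds for all complex $z$ and in particular the right-hand side is a genuine probability generating function (each factor $\frac{z+2k-1}{2k}$ has nonnegative coefficients summing to $1$ at $z=1$), consistent with $r_n$ taking values in $\{0,1,\dots,n\}$. I would keep the write-up to two or three lines, citing Proposition~\ref{prop:gfr} and Corollary~\ref{cor:count} and performing the elementary algebra above.
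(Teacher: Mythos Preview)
Your proposal is correct and matches the paper's approach exactly: the paper obtains Corollary~\ref{cor:gfr} simply by combining Corollary~\ref{cor:count} with \eqref{eq:gfr}, which is precisely what you do, together with the trivial rewriting $\frac{2^n}{4^n n!}=\prod_{k=1}^n\frac{1}{2k}$.
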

The last corollary gives, in turn, a complete information on the distribution of $r_n$.
\begin{corollary}\label{cor:arows_dist}
For every $n\ge1$ we have
\begin{equation}\label{arows_dist}r_n\stackrel d=\sum_{k=1}^nJ_k,\end{equation}
where $J_k$'s are independent and $J_k$ is 
a random variable  which is 1 with probability $1/(2k)$ and 0 with the remaining probability. In particular,
\begin{equation}\label{exp_var_arows}\E_nr_n=\sum_{k=1}^n\frac1{2k}=
\frac{H_n}2
,\quad
\operatorname{var}(r_n)=\sum_{k=1}^n\frac1{2k}\left(1-\frac1{2k}\right)=
\frac{H_n}2-\frac{H_n^{(2)}}4,
\end{equation}
where $H_n=\sum_{k=1}^n\frac1k$ and $H_n^{(2)}=\sum_{k=1}^n\frac1{k^2}$ are harmonic numbers of the first and second order, respectively. Furthermore,  
\begin{equation}\label{clt_arows}\frac{r_n-\frac{\ln n}2}{\sqrt{\frac{\ln n}2}}\stackrel d\longrightarrow N(0,1).
\end{equation}
\end{corollary}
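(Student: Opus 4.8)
The plan is to deduce everything in Corollary~\ref{cor:arows_dist} directly from the product formula for the probability generating function in Corollary~\ref{cor:gfr}. First I would observe that
\[
\E_nz^{r_n}=\prod_{k=1}^n\frac{z+2k-1}{2k}=\prod_{k=1}^n\left(\frac{1}{2k}\cdot z+\left(1-\frac1{2k}\right)\right),
\]
and each factor $\frac{1}{2k}z+(1-\frac1{2k})$ is exactly the probability generating function of a Bernoulli random variable $J_k$ taking the value $1$ with probability $1/(2k)$ and $0$ otherwise. Since the probability generating function of a sum of independent random variables is the product of their probability generating functions, and since a probability distribution on $\{0,1,\dots,n\}$ is uniquely determined by its probability generating function (a polynomial of degree $\le n$), this immediately gives the distributional identity \eqref{arows_dist}. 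From there, $\E_nr_n=\sum_k\E J_k=\sum_k\frac1{2k}=H_n/2$ and $\var(r_n)=\sum_k\var(J_k)=\sum_k\frac1{2k}(1-\frac1{2k})$ follow by linearity of expectation and additivity of variance over independent summands, giving \eqref{exp_var_arows}.

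For the central limit theorem \eqref{clt_arows}, the natural route is a Lindeberg-type triangular-array argument applied to the independent (but not identically distributed) summands $J_k$, $1\le k\le n$. I would set $s_n^2=\var(r_n)=\frac{H_n}2-\frac{H_n^{(2)}}4$ and note that, since $H_n=\ln n+\gamma+o(1)$ and $H_n^{(2)}$ converges, we have $s_n^2\sim\frac{\ln n}2\to\infty$; the same asymptotics let us replace the centering $H_n/2$ by $\frac{\ln n}2$ and the scaling $s_n$ by $\sqrt{\ln n/2}$ at the end via Slutsky's theorem. To verify the Lindeberg condition, note each summand is bounded, $|J_k-\E J_k|\le1$, so for any $\varepsilon>0$ the truncated second moments $\E\big[(J_k-\E J_k)^2 I_{\{|J_k-\E J_k|>\varepsilon s_n\}}\big]$ vanish as soon as $\varepsilon s_n>1$, i.e. for all large $n$; hence $\frac1{s_n^2}\sum_{k=1}^n\E\big[(J_k-\E J_k)^2 I_{\{|J_k-\E J_k|>\varepsilon s_n\}}\big]\to0$ trivially. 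Alternatively one can check Lyapunov's condition with third moments, since $\sum_k\E|J_k-\E J_k|^3\le\sum_k\frac1{2k}=O(\ln n)$ while $s_n^3\sim(\ln n/2)^{3/2}$, so the ratio is $O((\ln n)^{-1/2})\to0$. Either way the Lindeberg--Feller CLT yields $(r_n-\E_nr_n)/s_n\stackrel d\to N(0,1)$.

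The final cosmetic step is to rewrite the normalization in the form stated: since $s_n^2=\frac{\ln n}2+O(1)$ we have $s_n/\sqrt{\ln n/2}\to1$, and $(\E_nr_n-\frac{\ln n}2)/\sqrt{\ln n/2}=(\gamma/2+o(1))/\sqrt{\ln n/2}\to0$, so
\[
\frac{r_n-\frac{\ln n}2}{\sqrt{\frac{\ln n}2}}=\frac{s_n}{\sqrt{\ln n/2}}\cdot\frac{r_n-\E_nr_n}{s_n}+\frac{\E_nr_n-\frac{\ln n}2}{\sqrt{\ln n/2}}\stackrel d\longrightarrow N(0,1)
\]
by Slutsky's theorem. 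I do not anticipate a genuine obstacle here: the representation as a sum of independent bounded Bernoullis makes the CLT essentially automatic. The only point requiring a little care is bookkeeping with the harmonic-number asymptotics to justify swapping the exact mean and variance for the clean expressions $\frac{\ln n}2$ and $\sqrt{\ln n/2}$, and making sure the $J_k$'s really are independent — which is forced by the fact that the generating function factors as a product of linear terms, so no independence needs to be constructed by hand, it is simply read off.
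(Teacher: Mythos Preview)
Your proposal is correct and follows essentially the same approach as the paper's own proof: factor the probability generating function from Corollary~\ref{cor:gfr} into Bernoulli generating functions to obtain \eqref{arows_dist} and \eqref{exp_var_arows}, then invoke the Lindeberg condition (which holds trivially since the summands are uniformly bounded and $s_n^2\to\infty$) for the CLT. Your write-up is in fact more explicit than the paper's---you spell out the Slutsky step and offer Lyapunov as an alternative---but the underlying argument is the same.
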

\begin{proof} 
Note that a factor
\[\frac{z+2k-1}{2k}=\frac z{2k}+1(1-\frac1{2k}),\]
given in the previous corollary
is the probability generating function of a random variable $J_k$ which is 1 with probability $1/(2k)$ and 0 with the remaining probability. Since the product of the probability generating functions corresponds to adding independent random variables, we obtain \eqref{arows_dist} and 
 thus also  \eqref{exp_var_arows}. 
Finally,   since $J_k$ are uniformly  bounded and variances of their partial sums go to infinity, the Lindeberg condition for the central limit theorem (see e.g. \cite[Chapter~III~\S4]{shiryaev}) holds trivially. Since as $n\to\infty$,  $\E_n r_n\sim\operatorname{var}(r_n)\sim\frac{\ln n}2$, \eqref{clt_arows} holds.
\end{proof}

\section{Main results}\label{sec:mainres} 
Our technique allows us to obtain further results concerning the distributions (sometimes exact, sometimes only asymptotic)  of the statistics discussed above. We gather our results in the following two statements, concerning the total number of entries and the number of entries on the diagonal, respectively. Recall that $\Delta_n$ and $\Gamma_n$ denote the total number of $\beta/\delta$ and $\alpha/\gamma$ in the tableau of size $n$, respectively. 

\begin{theorem}\label{thm:ent}   Consider the set $\cS_n$ with the uniform probability measure $\P_n$. Then:
\begin{itemize}
\item[(i)]
For every $n\ge1$ we have
\begin{equation}\label{exact_D}\D_n\stackrel d=\sum_{k=1}^nI_k,\end{equation}
where $(I_k)$ are  independent and $\P(I_k=1)=1-\frac1{2k}$, $\P(I_k=0)=\frac1{2k}$ . In particular, 
\begin{equation}\label{exp_var_D}E_n\D_n=n-\frac{H_n}2,\quad \operatorname{var}(\D_n)=\frac{H_n}2-\frac{H_n^{(2)}}4,
\end{equation}
and, as $n\to\infty$,  
\begin{equation}\label{dist_D}\frac{\D_n-n+\frac12\ln n}{\sqrt{\frac12\ln n}}\stackrel d\longrightarrow N(0,1).
\end{equation}
\item[(ii)]For every $n\ge1$
\[\G_n\stackrel d=\D_n.\]
 In particular, \eqref{exact_D}, \eqref{exp_var_D}, and \eqref{dist_D} hold with $\D_n$ replaced by $\G_n$.
 \end{itemize}
 \end{theorem}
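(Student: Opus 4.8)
The plan is to establish part (i) by tracing the evolution of $\D_n$ as the tableau size increases, exactly as was done for $r_n$, and then deduce part (ii) by a symmetry argument. First I would determine the conditional distribution of $\D_n$ given $\mathcal D_{n-1}$. When a tableau $S\in\cS_{n-1}$ with $r_{n-1}$ $\a/\g$ rows is extended by a new column of length $n$, the number of new $\b/\delta$ entries placed in that column is either $0$ (if we put an $\a/\g$ in the bottom box, forcing all other boxes empty) or ranges over $0,1,\dots,r_{n-1}$ together with a $\b/\delta$ at the bottom, according to the count in Lemma~\ref{numberofext} and the case analysis in Lemma~\ref{lem:condist}. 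Writing $\D_n=\D_{n-1}+N$, where $N$ is the number of new $\b/\delta$'s, I would compute $\E(t^{\D_n}\mid\mathcal D_{n-1})=t^{\D_{n-1}}\E(t^N\mid\mathcal D_{n-1})$ and carry out the same binomial sum as in the proof of Lemma~\ref{lem:gf_r}, keeping track of the extra factor $t$ for each $\b/\delta$. I expect this to yield a clean closed form of the shape $\E(t^{\D_n}\mid\mathcal D_{n-1})=c(t)\,g(t)^{r_{n-1}}$ for explicit $c,g$ (analogous to $\frac{z+1}{2}(\frac{z+2}{3})^{r_{n-1}}$), which is exactly the form needed to iterate.

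Next I would combine this conditional generating function with the change-of-measure relation \eqref{rel_simp}: taking $\E_n$ of $\E(t^{\D_n}\mid\mathcal D_{n-1})$ converts $\E_n(\,\cdot\,3^{r_{n-1}}/n)$ into an expectation over $\cS_{n-1}$, and — provided $g(t)$ combines with the $3^{r_{n-1}}$ appropriately — this should reduce $\E_n t^{\D_n}$ to a multiple of $\E_{n-1}t^{\D_{n-1}}$ with a factor depending only on $t$ and $n$. Iterating down to $n=1$ (where $\D_1$ is $1$ with probability $1/2$ and $0$ otherwise, since the single diagonal box is $\b/\delta$ with probability $1/2$) should produce $\E_n t^{\D_n}=\prod_{k=1}^n\frac{t+2k-1}{2k}\cdot(\text{something})$; more precisely I anticipate $\E_n t^{\D_n}=\prod_{k=1}^n\bigl(1-\tfrac1{2k}+\tfrac{t}{2k}\cdot(\text{correction})\bigr)$, and after the dust settles the product should factor as $\prod_{k=1}^n\bigl((1-\tfrac1{2k})t+\tfrac1{2k}\bigr)$ — wait, one must be careful which of $t^0,t^1$ gets weight $1-\tfrac1{2k}$. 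Comparing with the claimed $\P(I_k=1)=1-\tfrac1{2k}$, the target is $\E_n t^{\D_n}=\prod_{k=1}^n\bigl(\tfrac1{2k}+(1-\tfrac1{2k})t\bigr)$. Once this product formula is in hand, \eqref{exact_D} is immediate since each factor is the p.g.f. of $I_k$ and products of p.g.f.'s correspond to sums of independent variables; then \eqref{exp_var_D} follows by summing means and variances, and \eqref{dist_D} follows from the Lindeberg CLT exactly as in Corollary~\ref{cor:arows_dist}, using $\E_n\D_n\sim\var(\D_n)\sim\tfrac12\ln n$.

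For part (ii), the cleanest route is a bijective symmetry of $\cS_n$ that swaps the roles of $\{\b,\delta\}$ and $\{\a,\g\}$. Transposing a staircase tableau about the main diagonal interchanges "rows" with "columns" and the rule "boxes left of a $\b/\delta$ are empty" with "boxes above an $\a/\g$ are empty", so the involution that transposes the Young diagram and simultaneously relabels $\b\leftrightarrow\a$, $\delta\leftrightarrow\g$ (or some such pairing) maps $\cS_n$ onto itself, preserves the uniform measure, and carries the total count of $\b/\delta$ to the total count of $\a/\g$. Hence $\G_n\stackrel d=\D_n$ under $\P_n$, and all conclusions of (i) transfer verbatim. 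The main obstacle I foresee is the bookkeeping in the conditional generating function computation for $\D_n$: unlike $r_n$, where the bottom-box $\a/\g$ case simply added $1$, here one must correctly weight the two sub-cases of the $\b/\delta$-at-bottom scenario (whether or not an additional $\a/\g$ is placed among the $r_{n-1}$ allowable boxes), and ensure the resulting $g(t)^{r_{n-1}}$ has base $g(t)$ such that $3^{r_{n-1}}g(t)^{r_{n-1}}$ telescopes cleanly under \eqref{rel_simp}; verifying that the algebra collapses to the advertised product — rather than something messier — is the delicate point, and a sign or index slip there would derail the identification of the $I_k$.
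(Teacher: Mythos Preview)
Your plan for part~(ii) is exactly the paper's: the transpose--relabel involution on $\cS_n$ swaps $\{\a,\g\}$ with $\{\b,\delta\}$, preserves $\P_n$, and carries $\D_n$ to $\G_n$.

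For part~(i), however, the paper uses a one--line observation that you are missing, and the direct iteration you sketch does \emph{not} close. The paper notes that every row of $S\in\cS_n$ either contains a $\b/\delta$ (necessarily unique, since all boxes to its left must be empty) or does not, in which case its leftmost entry is an $\a/\g$; hence
\[
r_n(S)+\D_n(S)=n
\]
deterministically. Combined with Corollary~\ref{cor:arows_dist} this gives at once
\[
\D_n=n-r_n=\sum_{k=1}^n(1-J_k)\stackrel d=\sum_{k=1}^nI_k,
\]
and \eqref{exp_var_D}, \eqref{dist_D} follow immediately.

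Your direct approach hits the following obstruction. With $\D_n=\D_{n-1}+N$ (where $N$ is the number of new $\b/\delta$'s in the added column), the case analysis of Lemma~\ref{lem:condist} indeed yields a clean conditional formula,
\[
\E(t^{N}\mid\mathcal D_{n-1})=\frac{t+1}{2}\left(\frac{2t+1}{3}\right)^{r_{n-1}},
\]
so that $\E(t^{\D_n}\mid\mathcal D_{n-1})=t^{\D_{n-1}}\cdot\frac{t+1}{2}\bigl(\frac{2t+1}{3}\bigr)^{r_{n-1}}$. After the change of measure \eqref{rel_simp} this becomes
\[
\E_nt^{\D_n}=\frac{t+1}{2n}\,\E_{n-1}\!\left[t^{\D_{n-1}}(2t+1)^{r_{n-1}}\right],
\]
which still depends on \emph{both} $\D_{n-1}$ and $r_{n-1}$; it is not a scalar multiple of $\E_{n-1}t^{\D_{n-1}}$, so the univariate recursion you anticipate simply does not exist. (The factor $3^{r_{n-1}}$ does combine with $g(t)^{r_{n-1}}$, as you hoped --- the trouble is what remains.) To decouple the two variables you would have to invoke precisely the identity $\D_{n-1}+r_{n-1}=n-1$, and once you have that, the paper's shortcut above is far simpler than carrying the generating--function machinery any further.
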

 Our second result concerns the entries on the diagonal. Recall that $A_n$ (resp. $B_n$) denote the number of $\alpha/\gamma$ (resp. $\beta/\delta$) on the diagonal of a tableau of size $n$.  For these parameters we get
 \begin{theorem}\label{thm:diag}  The expected value and the variance of the number $A_n$ of $\a/\g$ on the diagonal of a random staircase tableau of size $n$ are, respectively,
\begin{equation}\label{exp_var_diag}\E_nA_n=\frac n2\quad\mbox{and}\quad\operatorname{var}(A_n)=\frac{n+1}{12}.\end{equation}
Moreover, 
\begin{equation}\label{clt_diag}\frac{A_n-n/2}{\sqrt{n/12}}\stackrel d\longrightarrow N(0,1).\end{equation}
 Furthermore, for every $n\ge1$ we have
\begin{equation}\label{eq_diag}B_n\stackrel d= A_n.\end{equation} 
In particular, \eqref{exp_var_diag}  and \eqref{clt_diag}  
hold for $B_n$ in place of $A_n$.
\end{theorem}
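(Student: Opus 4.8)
The plan is to analyze $A_n$ (the number of $\alpha/\gamma$ on the diagonal) by the same evolution-plus-change-of-measure scheme used for $r_n$. First I would determine how $A_n$ changes when a tableau of size $n-1$ is extended to size $n$. Looking at the extension process in Lemma~\ref{numberofext}, the diagonal entry of the new (size-$n$) tableau is precisely the symbol placed in the bottom box of the new column: it is an $\alpha/\gamma$ in exactly the $2\cdot3^{r_{n-1}}$ extensions where we place $\alpha$ or $\gamma$ there, and a $\beta/\delta$ in the other $2\cdot3^{r_{n-1}}$ extensions. Hence, conditionally on $\mathcal D_{n-1}$, $A_n = A_{n-1} + \epsilon_n$ where $\epsilon_n$ takes the value $1$ on the $\alpha/\gamma$-at-bottom extensions and $0$ otherwise; moreover the value of $\epsilon_n$ is independent of which of the extensions above the bottom box we choose, so the joint conditional generating function factors. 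Concretely I expect
\[
\E\bigl(z^{A_n}\,y^{r_n}\mid\mathcal D_{n-1}\bigr)=z^{A_{n-1}}\cdot\frac{y+1}{2}\left(\frac{y+2}{3}\right)^{r_{n-1}}\cdot\frac{z\cdot\tfrac{2}{4\cdot3^{r_{n-1}}}+\cdots}{\cdots},
\]
which after the bookkeeping collapses to a clean product; one should track $r_{n-1}$ alongside $A$ precisely because the change of measure \eqref{rel_simp} reintroduces a factor $3^{r_{n-1}}$.

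Next I would apply the tower property together with the change-of-measure identity \eqref{rel_simp}, i.e. $\E_n X_{n-1}=\frac1n\E_{n-1}3^{r_{n-1}}X_{n-1}$, exactly as in the proof of Proposition~\ref{prop:gfr}. Writing $\phi_n(z)=\E_n z^{A_n}$, the conditioning step gives $\E_n z^{A_n}=\E_n\bigl(z^{A_{n-1}}g(z,r_{n-1})\bigr)$ for an explicit $g$; the $3^{r_{n-1}}$ from \eqref{rel_simp} cancels against denominators coming from the conditional distribution, leaving a recursion of the form $\phi_n(z)=\frac{c_n(z)}{n}\,\phi_{n-1}(z)$ for some affine-in-$z$ coefficient $c_n(z)$. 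I expect the end result to be
\[
\E_n z^{A_n}=\prod_{k=1}^n\frac{z+1}{2},
\]
i.e. $A_n$ is distributed as a sum of $n$ independent fair coin flips, $A_n\stackrel{d}{=}\operatorname{Bin}(n,1/2)$. From that, \eqref{exp_var_diag} is immediate ($\E_nA_n=n/2$, $\operatorname{var}(A_n)=n/4$)\,--- and here I notice a slight discrepancy with the stated variance $(n+1)/12$, which suggests the true conditional recursion is not simply Bernoulli$(1/2)$ but something closer to the $J_k$-type structure of Corollary~\ref{cor:arows_dist}; in any case, once the product formula for $\E_n z^{A_n}$ is in hand, identifying it as a product of probability generating functions of independent (not necessarily identically distributed) $0/1$ variables yields the moments directly, and the central limit theorem \eqref{clt_diag} follows from the Lindeberg condition exactly as in Corollary~\ref{cor:arows_dist}, since the summands are uniformly bounded and the variance grows linearly.

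Finally, for the identity $B_n\stackrel{d}{=}A_n$ of \eqref{eq_diag}: every diagonal box is non-empty and contains exactly one of the four symbols, so $A_n+B_n=n$ deterministically. Thus it suffices to show the distribution of $A_n$ is symmetric about $n/2$, which is visible from the generating function (each factor is invariant under $z\mapsto z$ in the balanced case, or more robustly $\E_n z^{A_n}=\E_n z^{n-B_n}$ forces $\E_n z^{B_n}=z^n\E_n z^{-1\cdot A_n}$, and symmetry of the product formula gives equality of laws). Alternatively one runs the same evolution argument directly for $B_n$, noting that the roles of $\{\alpha,\gamma\}$ and $\{\beta,\delta\}$ at the bottom box are interchangeable in the count $2\cdot3^{r_{n-1}}$ versus $2\cdot3^{r_{n-1}}$, giving literally the same recursion.

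The main obstacle I anticipate is the joint conditional generating function step: one must be careful that $A_n$ depends on the new tableau only through the bottom box while $r_n$ depends on the whole new column, and verify that these two pieces of information are conditionally independent given the choice "$\alpha/\gamma$ at bottom" versus "$\beta/\delta$ at bottom" (they are, but the case $r_{n-1}=0$ must be checked separately, as in Lemma~\ref{numberofext}). Getting the exact coefficient $c_n(z)$ right, and in particular reconciling the resulting variance with the claimed $(n+1)/12$, is where the real computation lies; the rest is a routine repetition of the machinery already established for $r_n$.
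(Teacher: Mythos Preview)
Your counting of the extensions is wrong, and this is not a detail that can be fixed by more careful bookkeeping: it is the whole difficulty. When an $\alpha/\gamma$ is placed in the bottom box of the new column, \emph{every} box above it in that column is forced to be empty (rule: all boxes in the same column and above an $\alpha$ or $\gamma$ are empty). Hence there are exactly $2$ such extensions, not $2\cdot 3^{r_{n-1}}$; the remaining $4\cdot 3^{r_{n-1}}-2$ extensions all have a $\beta/\delta$ at the bottom. Consequently
\[
\P(\epsilon_n=1\mid\mathcal D_{n-1})=\frac{2}{4\cdot 3^{r_{n-1}}}=\frac{1}{2\cdot 3^{r_{n-1}}},
\]
which is exactly \eqref{plus1}. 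The indicator $\epsilon_n$ is therefore \emph{not} conditionally Bernoulli$(1/2)$ and is highly correlated with $r_{n-1}$; in particular $A_n$ is not $\operatorname{Bin}(n,1/2)$, no product formula $\E_n z^{A_n}=\prod_k\frac{z+1}{2}$ holds, and the discrepancy you noticed with the variance $(n+1)/12$ is real, not a slip.

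Because $\epsilon_n$ depends on $r_{n-1}$, one cannot decouple $A_n$ from $r_n$: the paper is forced to track the joint generating function $\E_n t^{A_n}z^{r_n}$ (Proposition~\ref{prop:bivgf}). The resulting recursion does \emph{not} collapse to $\phi_n(z)=\frac{c_n(z)}{n}\phi_{n-1}(z)$ for a single affine $c_n$; it produces a two-term recurrence whose coefficients $c_{n,k}(z)$ are (at $z=1$) the type-B Eulerian numbers. The moments \eqref{exp_var_diag} are then read off from $c_{n,n-1}(1)$ and $c_{n,n-2}(1)$, and the CLT \eqref{clt_diag} is obtained not via Lindeberg on a sum of independent indicators (there is no such representation) but via Bender's theorem applied to the bivariate exponential generating function $\sum_n\psi_n(w)z^n=\frac{(1-w)e^{(1-w)z/2}}{1-we^{(1-w)z}}$. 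Your argument for $B_n\stackrel d=A_n$ via $A_n+B_n=n$ and symmetry of the law of $A_n$ about $n/2$ is, however, sound once one has the correct generating function (the paper instead invokes the transpose-and-swap-$\alpha\leftrightarrow\beta$, $\gamma\leftrightarrow\delta$ involution, and also gives a direct computation).
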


\noindent{\bf Remark:} While it seems intuitively clear that the expected number of letters $\alpha/\gamma$ on the diagonal is $n/2$ as \eqref{exp_var_diag} asserts, the expression for the variance is much less intuitive. It implies, in particular, that the entries $\alpha/\gamma$ and $\beta/\delta$ along the diagonal are not chosen independently from one another with equal probabilities as one might have hoped (if that were the case the variance would be $n/4$).

\section{proofs}\label{proofs}
We begin by observing that, as is obvious from the definition, staircase tableaux are symmetric under taking the transpose and  exchanging the $\alpha$'s with the $\beta$'s and  the $\gamma$'s with the $\delta$'s. 
 It therefore is immediate that parts (ii) of both theorems follow from the respective parts (i). Furthermore, the proof of Theorem~\ref{thm:ent}~(i) may be completed by using  Corollary~\ref{cor:arows_dist} and the relation 
\begin{equation}\label{eq:rplusdel}r_n(S)+\Delta_n(S)=n\end{equation} since then
\[\Delta_n=n-r_n=\sum_{k=1}^n(1-J_k)\stackrel d=\sum_{k=1}^nI_k.\]
But \eqref{eq:rplusdel} is clear once we 
 notice that  for any row of a  staircase tableau exactly one of the following statements is true
\begin{itemize}
\item it contains a $\b/\delta$
\item it is indexed by $\a/\g$.
\end{itemize}
This  proves Theorem~\ref{thm:ent} and we now turn our attention to a considerably more involved proof of Theorem~\ref{thm:diag}~(i). 

\subsection{Proof of Theorem~\ref{thm:diag}~(i)}
The idea is the same as for Corollary~\ref{cor:arows_dist}  except that this time we will actually need the joint probability generating function of $A_n$ and $r_n$.  The final expression will turn out  to be substantially more complicated than what we encountered earlier and thus harder to analyze.  Nonetheless, the situation is quite analogous to the case of the number of rows in permutation tableaux (see \cite[Section~4]{hj}).   For the reader's convenience we break up our proof into several steps each of them discussed in a separate section below. We now briefly outline the major steps in the proof indicating the section in which they are treated. We begin in the forthcoming section with the derivation of the probability generating function.   Its coefficients satisfy certain recurrences. This, in particular, enables us  to derive the exact formulas for the expected value and the variance of $A_n$  (see Subsection~\ref{subsec:exp_var} below). Furthermore, the nature of these recurrences suggests that the coefficients are related to the classical Eulerian number associated with the number of rises in random permutations. In fact, our coefficients exactly match the numbers often called the \lq\lq Eulerian numbers of type B\rq\rq. We establish and discuss further this connection  in Subsection~\ref{subsec:euler}. We think it is of independent interest and perhaps worthy of further exploration. 

Once this connection is made it is then expected that the the coefficients follow the normal law (just as the classical Eulerian numbers do).  As a matter of fact, one of the proofs (although not the first) of the asymptotic normality of the classical Eulerian
numbers is via a fairly general device due to Bender (\cite{b}) and nowadays referred to as Bender's theorem. This is, indeed, the approach we take and in the subsection~\ref{subsec:bender} we verify the conditions of Bender's theorem to conclude our proof. 

\subsubsection{Bivariate generating function}

 We give the formula for the joint generating function of $r_n$ and $A_n$ in the following statement.
\begin{proposition}\label{prop:bivgf}   Let  $z,t$ be complex numbers. Then we have
\begin{equation}\label{gf_D_n}\E_nt^{A_n}z^{r_n}=\frac1{2^nn!}\sum_{k=0}^n(t-1)^{n-k}c_{n,k}(z),
\end{equation}
where the coefficients $\{c_{m,\ell}(z):\ 0\le\ell\le m\le n\}$ satisfy the following recurrence
\[c_{m+1,\ell}(z)=(z+2\ell)c_{m,\ell}(z)+(z+2(\ell-1)+1)c_{m,\ell-1}(z),\quad 1\le\ell\le m,\]
with the following boundary conditions:
\[c_{0,0}(z)=1,\quad c_{m+1,m+1}(z)=(z+2m+1)c_{m,m}(z),\quad c_{m+1,0}(z)=zc_{m,0}(z).\]
\end{proposition}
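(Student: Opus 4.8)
The plan is to push the argument behind Proposition~\ref{prop:gfr} one variable further, recording $A_n$ through a variable $t$ alongside $r_n$ through $z$. When a tableau $S\in\cS_{n-1}$ is extended to size $n$ by adjoining a new column of length $n$ on the left, the bottom box of that column becomes the new southwest-most diagonal box and the old diagonal boxes are untouched; hence $A_n=A_{n-1}+1$ exactly when an $\a$ or a $\g$ is placed in that bottom box, and $A_n=A_{n-1}$ otherwise. But, as seen in the proof of Lemma~\ref{lem:condist}, this is precisely the event $C_{\a/\g}$ (which yields $r_n=r_{n-1}+1$) against its complement $C_{\b/\delta}$ (which yields $r_n=r_{n-1}-k$, $k\ge 0$). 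So, conditionally on $\mathcal D_{n-1}$, the statistics $A_n$ and $r_n$ jump up together or not at all.

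\textbf{The conditional generating function and the recursion for $G_n$.} Writing $G_n(t,z):=\E_nt^{A_n}z^{r_n}$ and using that $A_n=A_{n-1}$ on $C_{\b/\delta}$, the computation already carried out in the proof of Lemma~\ref{lem:gf_r} gives
\begin{equation*}
\E\bigl(t^{A_n}z^{r_n}\mid\mathcal D_{n-1}\bigr)=t^{A_{n-1}}\Bigl[(t-1)\tfrac z2\bigl(\tfrac z3\bigr)^{r_{n-1}}+\tfrac{1+z}2\bigl(\tfrac{z+2}3\bigr)^{r_{n-1}}\Bigr],
\end{equation*}
since the $\b/\delta$-part of the sum is $\E(z^{r_n}\mid\mathcal D_{n-1})-\tfrac z2(z/3)^{r_{n-1}}$, which Lemma~\ref{lem:gf_r} evaluates. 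Taking $\E_n$, using the tower property and then the change of measure \eqref{rel_simp} (whose factor $3^{r_{n-1}}$ cancels the powers of $1/3$ above), one obtains
\begin{equation*}
G_n(t,z)=\frac{(t-1)z}{2n}\,G_{n-1}(t,z)+\frac{1+z}{2n}\,G_{n-1}(t,z+2),\qquad G_0\equiv1,
\end{equation*}
which at $t=1$ collapses to the recursion behind Corollary~\ref{cor:gfr}. I would then prove \eqref{gf_D_n} by induction on $n$ (the base case being a direct check on $\cS_1$): substituting $G_{n-1}(t,z)=\tfrac1{2^{n-1}(n-1)!}\sum_k(t-1)^{n-1-k}c_{n-1,k}(z)$ and comparing coefficients of $(t-1)^{n-\ell}$ forces $c_{0,0}=1$, $c_{n,0}(z)=z\,c_{n-1,0}(z)$, $c_{n,n}(z)=(1+z)\,c_{n-1,n-1}(z+2)$, and
\begin{equation*}
c_{n,\ell}(z)=z\,c_{n-1,\ell}(z)+(1+z)\,c_{n-1,\ell-1}(z+2),\qquad 1\le\ell\le n-1.
\end{equation*}

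\textbf{The main obstacle: reconciling the two forms.} What remains is to recognize this array as the one in the statement, whose recursion instead carries the $\ell$-dependent coefficients $z+2\ell$ and $z+2\ell-1$ and no shift of the argument. Equating the two expressions for $c_{n,\ell}(z)$ reduces the matter to the auxiliary polynomial identity
\begin{equation*}
(1+z)\,c_{m,j}(z+2)=(z+2j+1)\,c_{m,j}(z)+2(j+1)\,c_{m,j+1}(z),\qquad 0\le j\le m,
\end{equation*}
which I would establish by a second induction on $m$ from the stated recursion; the boundary values $c_{n,0}(z)=z^n$ and $c_{n,n}(z)=\prod_{j=0}^{n-1}(z+2j+1)$ then follow and are consistent with both forms. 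Alternatively, and more transparently, one can unroll the $G_n$-recursion into a sum over words in $\{S,U\}$ of length $n$, which yields the closed form $c_{n,k}(z)=\bigl(\prod_{i=1}^{k}(z+2i-1)\bigr)\,h_{n-k}(z,z+2,\dots,z+2k)$ with $h_d$ the complete homogeneous symmetric polynomial; the stated three-term recursion is then immediate from $h_d(x_1,\dots,x_p)=x_p h_{d-1}(x_1,\dots,x_p)+h_d(x_1,\dots,x_{p-1})$. Everything apart from this reconciliation closely parallels Section~\ref{count}.
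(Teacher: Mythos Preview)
Your argument is correct, but the organization differs from the paper's in one instructive way. Both proofs begin identically: the conditional generating function and the change of measure give the two--term recursion
\[
G_n(t,z)=\frac{(t-1)z}{2n}\,G_{n-1}(t,z)+\frac{1+z}{2n}\,G_{n-1}(t,z+2).
\]
You then immediately expand $G_{n-1}$ in its \emph{final} form $\sum_k(t-1)^{n-1-k}c_{n-1,k}(z)$, which produces the ``shifted'' recursion $c_{n,\ell}(z)=z\,c_{n-1,\ell}(z)+(1+z)\,c_{n-1,\ell-1}(z+2)$ and leaves you with the reconciliation problem (your auxiliary identity, or the nice closed form in terms of $h_{n-k}$). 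The paper instead keeps an \emph{intermediate} expansion
\[
\E_nt^{A_n}z^{r_n}=\frac1{2^m n^{\underline m}}\sum_{\ell=0}^m(t-1)^{m-\ell}c_{m,\ell}(z)\,\E_{n-m}t^{A_{n-m}}(z+2\ell)^{r_{n-m}},
\]
where the shift is carried not by the coefficients but by the argument $z+2\ell$ of the remaining expectation. One more application of the basic recursion to $\E_{n-m}t^{A_{n-m}}(z+2\ell)^{r_{n-m}}$ then spits out the factors $z+2\ell$ and $z+2\ell+1$ directly, so the $\ell$--dependent recursion of the statement appears with no further work. In short: the paper avoids your reconciliation step entirely by postponing the full expansion, while your route costs an extra induction but yields, as a bonus, the explicit formula $c_{n,k}(z)=\prod_{i=1}^{k}(z+2i-1)\cdot h_{n-k}(z,z+2,\dots,z+2k)$ that the paper does not record.
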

\begin{proof}
If $n=0$ and we set $c_{0,0}(z)=1$ then both sides of \eqref{gf_D_n} are 1. Otherwise,  let $I_j$ indicate the event that we put an $\a/\g$ in the $j$th  box on the diagonal  (counting from NE to SW). Then $A_n=\sum_{j=1}^nI_j$ and we have
\[\E_nt^{A_n}z^{r_n}=\E_n\E (t^{A_{n-1}+I_n}z^{r_n}|\mathcal D_{n-1})=\E_nt^{A_{n-1}}\E (t^{I_n}z^{r_n}|\mathcal D_{n-1}),\]
where we have used the basic properties of the conditional expectations (see \cite[Formulas~(16), p.~79 and (17), p.~80]{shiryaev}).
Now,  $I_n=1$ means that we put an $\a/\g$ in the SW corner. In that case we have $r_n=r_{n-1}+1$ and since this happens with probability $1/(2\cdot3^{r_{n-1}})$ we get
\[\E(t^{I_n}z^{r_n}|\mathcal D_{n-1})=tz^{r_{n-1}+1}\frac1{2\cdot3^{r_{n-1}}}+\E(t^{I_n}z^{r_n}I_{I_n=0}|\mathcal D_{n-1}).
\]
The second term is equal to
\begin{eqnarray*}\E(z^{r_n}I_{I_n=0}|\mathcal D_{n-1})&=&\E(z^{r_n}|\mathcal D_{n-1})-\E(z^{r_n}I_{I_n=1}|\mathcal D_{n-1})\\&=&\frac{z+1}2\left(\frac{z+2}3\right)^{r_{n-1}}-\frac{z^{r_{n-1}+1}}{2\cdot3^{r_{n-1}}}.
\end{eqnarray*}
Combining we obtain
\[\E(t^{A_n}z^{r_n}|\mathcal D_{n-1})=\frac{z(t-1)}2\left(\frac z3\right)^{r_{n-1}}+\frac{z+1}2\left(\frac{z+2}3\right)^{r_{n-1}}.
\]
Using the second part of \eqref{rel_simp}  leads to the basic recurrence
\begin{eqnarray*}\E_nt^{A_n}z^{r_n}&=&\frac{z(t-1)}2\E_nt^{A_{n-1}}\left(\frac z3\right)^{r_{n-1}}+\frac{z+1}2\E_nt^{A_{n-1}}\left(\frac{z+2}3\right)^{r_{n-1}}\\&=&\frac1{2n}\left\{
z(t-1)\E_{n-1}t^{A_{n-1}}z^{r_{n-1}}+(z+1)\E_{n-1}t^{A_{n-1}}(z+2)^{r_{n-1}}\right\}.
\end{eqnarray*}
Upon further iteration of this relation we obtain for any $0\le m<n$:
\[\E_nt^{A_n}z^{r_n}=\frac1{2^mn(n-1)\dots(n-m+1)}\sum_{\ell=0}^m(t-1)^{m-\ell}c_{m,\ell}\E_{n-m}t^{A_{n-m}}(z+2\ell)^{r_{n-m}},
\]
for some  coefficients $c_{m,\ell}=c_{m,\ell}(z)$, $0\le\ell\le m$.
To see that they satisfy the stated recurrence, we apply the basic recurrence (with $n-m$ instead of $n$ and $z+2\ell$ instead of $z$) to the expectation on the right--hand side above. We get that it is equal to 
\begin{eqnarray*}&&
\frac1{2(n-m)}\Big\{(z+2\ell)(t-1)\E_{n-m-1}t^{A_{n-m-1}}(z+2\ell)^{r_{n-m-1}}\\&&\qquad\qquad+(z+2\ell+1)\E_{n-m-1}t^{A_{n-m-1}}(z+2(\ell+1))^{r_{n-m-1}}\Big\}.\end{eqnarray*}
Substituting this into the above formula for $\E_nt^{A_n}z^{r_n}$ and multiplying both sides by $2^{m+1}n(n-1)\cdot\dots\cdot(n-m)$ (to avoid writing a denominator  on the right--hand side) we get
\begin{eqnarray*}&&\sum_{\ell=0}^m(t-1)^{m-\ell}c_{m,\ell}\Big\{
(z+2\ell)(t-1)\E_{n-m-1}t^{A_{n-m-1}}(z+2\ell)^{r_{n-m-1}}\\&&
\quad\quad+(z+2\ell+1)\E_{n-m-1}t^{A_{n-m-1}}(z+2(\ell+1))^{r_{n-m-1}}
\Big\}.
\end{eqnarray*}
Splitting this in two sums, isolating the $\ell=0$ term in the first, and the $\ell=m$ in the second,  and then shifting the index in the second sum, we further obtain 
\begin{eqnarray*}&&(t-1)^mc_{m,0}z(t-1)\E_{n-(m+1)}t^{A_{n-(m+1)}}z^{r_{n-(m+1)}}\\&&\quad+\sum_{\ell=1}^m(t-1)^{m-\ell}c_{m,\ell}
(z+2\ell)(t-1)\E_{n-m-1}t^{A_{n-m-1}}(z+2\ell)^{r_{n-m-1}}\\&&
\quad+\sum_{\ell=0}^{m-1}(t-1)^{m-\ell}c_{m,\ell}(z+2\ell+1)\E_{n-(m+1)}t^{A_{n-(m+1)}}(z+2(\ell+1))^{r_{n-(m+1)}}\\&&
\quad+ c_{m,m}(z+2m+1)\E_{n-(m+1)}t^{A_{n-(m+1)}}(z+2(m+1))^{r_{n-(m+1)}}\\
&&=
(t-1)^{m+1}c_{m,0}z\E_{n-(m+1)}t^{A_{n-(m+1)}}z^{r_{n-(m+1)}}\\&&\quad+\sum_{\ell=1}^m(t-1)^{m+1-\ell}c_{m,\ell}
(z+2\ell)\E_{n-(m+1)}t^{A_{n-(m+1)}}(z+2\ell)^{r_{n-(m+1)}}\\&&
\quad+\sum_{\ell=1}^{m}(t-1)^{m-(\ell-1)}c_{m,\ell-1}(z+2(\ell-1)+1))\E_{n-(m+1)}t^{A_{n-(m+1)}}(z+2\ell)^{r_{n-(m+1)}}
\\&&
\quad+ c_{m,m}(z+2m+1)\E_{n-(m+1)}t^{A_{n-(m+1)}}(z+2(m+1))^{r_{n-(m+1)}}.
\end{eqnarray*}
So, if we write it as 
\[\sum_{\ell=0}^{m+1}(t-1)^{m+1-j}c_{m+1,\ell}\E_{n-(m+1)}t^{A_{n-(m+1)}}(z+2\ell)^{r_{n-(m+1)}},\]
as postulated above, we see that
\[ c_{m+1,0}=zc_{m,0},\quad c_{m+1,m+1}=(z+2m+1)c_{m,m}\] and, upon combining the two middle sums, that 
\[c_{m+1,\ell}=(z+2\ell)c_{m,\ell}+(z+2(\ell-1)+1)c_{m,\ell-1},\quad\mbox{for}\quad1\le \ell\le m.
\]
Taking $m=n-1$ we get 
\[\E_nt^{A_n}z^{r_n}=\frac1{2^{n-1}n!}\sum_{\ell=0}^{n-1}(t-1)^{n-1-\ell}c_{n-1,\ell}\E_1t^{A_1}(z+2\ell)^{r_1},
\]
and since 
\[\E_1t^{A_1}(z+2\ell)^{r_1}=\frac12t(z+2\ell)+\frac12=\frac12(t-1)(z+2\ell)+\frac12(z+2\ell+1),
\]
we can write
\[\E_nt^{A_n}z^{r_n}=\frac1{2^nn!}\sum_{k=0}^n(t-1)^{n-k}c_{n,k},
\]
where the coefficients $\{c_{m,\ell}:\ 0\le\ell\le m\le n\}$ satisfy the stated recurrence and the boundary conditions. The proof is complete.
\end{proof}

\subsubsection{Expected value and the variance}\label{subsec:exp_var}
A number of properties of $(A_n)$ can be deduced from Proposition~\ref{prop:bivgf}. We illustrate this by obtaining the exact expression for the expected value and for the variance of $A_n$. To help facilitate that we   put the coefficients $(c_{n,k})$ in a Pascal type triangle.
$$
\begin{array}{ccccc}
& &c_{0,0} &&\\
&&\swarrow\hspace{.3cm}\searrow&&\\
&&&&\\
&c_{1,0}&&c_{1,1}&\\
&&&&\\
&\swarrow\hspace{.4cm}\searrow&&\swarrow\hspace{.3cm}\searrow&\\
 &\dots&\dots&\dots&\\
 &\dots&\dots&\dots&\\
\swarrow &\searrow\hspace{.2cm}\swarrow&\dots&\searrow\hspace{.2cm}\swarrow&\searrow
\\
c_{n,0} &c_{n,1}&\dots&c_{n,n-1}&c_{n,n}
\end{array}
$$
The SW move from a coefficient $c_{m,\ell}$ has weight 
$z+2\ell$ and a SE move  has  weight $z+2\ell+1$. The  value of a given coefficient is obtained by summing over all possible paths leading to it from the root $c_{0,0}$ the products of weighs  corresponding to the moves along the path. For example, there is only one path leading to $c_{n,n}$ (all moves are SE)  and hence
$$c_{n,n}=\prod_{j=0}^{n-1}(z+2j+1).$$
Likewise, paths leading to $c_{n,n-1}$ have exactly one SW move; thus there are $n$ of them and if the sole  SW move is from $c_{k,k}$, $0\le k\le n-1$, then the weight of that path is
\[\left(\prod_{j=0}^{k-1}(z+2j+1)\right)(z+2k)\left(\prod_{j=k}^{n-2}(z+2j+1)\right).\]  
Consequently,
\[c_{n,n-1}(z)=\sum_{k=0}^{n-1}
\left(\prod_{j=0}^{k-1}(z+2j+1)\right)(z+2k)\left(\prod_{j=k}^{n-2}(z+2j+1)\right).\]  
The significance of this is that
\begin{eqnarray*}\E_nA_n&=&\frac\partial{\partial t}\E_nt^{A_n}z^{r_n}_{\big|t=1,z=1}=\frac1{2^nn!}c_{n,n-1}(1)
\\&
=&\frac1{2^nn!}
\sum_{k=0}^{n-1}
\left(\prod_{j=0}^{k-1}2(j+1)\right)(2k+1)\left(\prod_{j=k}^{n-2}2(j+1)\right).
\\&
=&\frac1{2^nn!}
2^{n-1}(n-1)!
\sum_{k=0}^{n-1}(2k+1)=\frac n2.
\end{eqnarray*}
This proves the first part of \eqref{exp_var_diag}.
Similarly, to compute the variance we use $\operatorname{var}(A_n)=\E A_n(A_n-1)+\E A_n-(\E A_n)^2$ and the fact that  $\E A_n(A_n-1)=\frac{\partial^2}{\partial t^2}(\E t^{A_n})_{|t=1}=\frac2{2^nn!}c_{n,n-2}(1)$.
Paths leading to $c_{n,n-2}(1)$ have exactly two SW moves, the first could be from any $c_{k,k}$, $0\le k\le n-2$ and the second from $c_{\ell,\ell-1}$ for some $k<\ell\le n-1$. These two moves  have weights $2k+1$ and $2\ell-1$ respectively, and the remaining SE moves  have weights
\[2\cdot1,2\cdot2,\dots, 2k,2(k+1),2(k+2)\dots,2(\ell-1),2\ell,\dots,2(n-1).
\]
Therefore,
\begin{eqnarray*}c_{n,n-2}(1)&=&\sum_{k=0}^{n-2}\sum_{\ell=k+1}^{n-1}(2k+1)(2\ell-1)2^{n-2}(n-2)!\\&=&2^{n-2}(n-2)!\sum_{\ell=1}^{n-1}(2\ell-1)\sum_{k=0}^{\ell-1}(2k+1)
\\&=&
2^{n-2}(n-2)!\sum_{\ell=1}^{n-1}(2\ell-1)\ell^2\\&=&2^{n-2}(n-2)!\frac16n(n-1)(3n^2-5n+1)
\\&=&2^{n-2}n!(\frac12n^2-\frac56n+\frac16).
\end{eqnarray*}
Hence
\[\operatorname{var}(A_n)=2\frac{2^{n-2}n!}{2^nn!}(\frac12n^2-\frac56n+\frac16)+\frac n2-\frac{n^2}4
=\frac{n^2}4-\frac5{12}n+\frac1{12}+\frac{6n}{12}-\frac{n^2}4=\frac{n+1}{12}.
\]
Thus we have proved the second part of \eqref{exp_var_diag} as well.

\subsubsection{Connections to generalized Eulerian numbers}\label{subsec:euler}
Before establishing the asymptotic normality of $(A_n)$ we take a closer look at the doubly indexed sequence $\{c_{n,k}\}$ since it has intriguing connections that may be of interest in their own right. It is featured as  entry A145901 in \cite{s} and is closely related to another sequence from \cite{s}, namely A039755. More precisely, $c_{n,k}=2^kk!W_2(n,k)$ where ${W_m(n,k)}$ are the Whitney numbers of the second kind  satisfying the recurrence 
\[W_m(n,k)=(mk+1)W_m(n-1,k)+W_m(n-1,k-1).
\]
The numbers $(W_m(n,k))$ were introduced in \cite{dow} and their properties were studied in \cite{ben_dm,ben_aam,ben_log_conc}. 
 Since we are dealing exclusively with the case $m=2$ we  drop the subscript and we  write $W(n,k)$ for $W_2(n,k)$.
Thus, the generating function of $A_n$ may be written as
\begin{equation}\label{psit-1}\psi_n(t)=\E_nt^{A_n}=\frac1{2^nn!}\sum_{k=0}^n2^kk!W(n,k)(t-1)^{n-k}.
\end{equation}
It is perhaps of interest to mention that the numbers $(2^kk!W(n,k))$ themselves satisfy the central (and local) limit theorem as was shown in \cite{clark}. However, this is not exactly what we want since the generating function above is in powers of $t-1$ rather than $t$. In  terms of powers of $t$ $\psi_n(t)$ has the following form.
\begin{proposition}\label{prop:gfr} The probability generating function of the number of $\alpha/\gamma$ entries on the diagonal of a random staircase tableau of size $n$ has the form
\begin{equation}\label{gf_An}\psi_n(t)=
\frac1{2^nn!}\sum_{m=0}^nV(n,m)t^m,\end{equation}
where the numbers $\{V(n,m),\  0\le m\le n\}$ satisfy the boundary condition $V(n,0)=1$, the symmetry relation $V(n,m)=V(n,n-m)$, and the recurrence 
\begin{equation}\label{v-recur}V(n,m)=(2m+1)V(n-1,m)+(2(n-m)+1)V(n-1,m-1).\end{equation}
The explicit expression for $V(n,m)$ is given by
\begin{equation}\label{vnm}V(n,m)=\sum_{k=0}^{n-m}2^kk!W(n,k){n-k\choose m}(-1)^{n-k-m},\quad 0\le m\le n.\end{equation}
\end{proposition}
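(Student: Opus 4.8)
The plan is to take \eqref{psit-1} as the starting point and re-expand it in powers of $t$. Substituting $(t-1)^{n-k}=\sum_{m=0}^{n-k}\binom{n-k}{m}(-1)^{n-k-m}t^m$ into \eqref{psit-1} and interchanging the two summations shows at once that $\psi_n$ is a polynomial of degree at most $n$, and that, defining $V(n,m)$ through \eqref{gf_An}, one has $V(n,m)=0$ for $m>n$ and the explicit formula \eqref{vnm} holds by construction. It then remains to verify the recurrence, the boundary condition, and the symmetry for these coefficients.

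The substantive step is the recurrence \eqref{v-recur}. Put $P_n(t):=2^nn!\,\psi_n(t)=\sum_{k=0}^n2^kk!W(n,k)(t-1)^{n-k}=\sum_mV(n,m)t^m$, and write $f_j:=2^jj!W(n-1,j)$, so that $P_{n-1}(t)=\sum_j f_j(t-1)^{n-1-j}$. Multiplying the Whitney recurrence $W(n,k)=(2k+1)W(n-1,k)+W(n-1,k-1)$ by $2^kk!$ gives $2^kk!W(n,k)=(2k+1)f_k+2k\,f_{k-1}$. Substituting this into $P_n$, shifting the index in the $f_{k-1}$ part, and using the elementary identities
\[\sum_{k=0}^N(2k+1)a_ks^{N-k}=(2N+1)A(s)-2sA'(s),\qquad\sum_{k=0}^N(k+1)a_ks^{N-k}=(N+1)A(s)-sA'(s),\]
valid for any polynomial $A(s)=\sum_{k=0}^Na_ks^{N-k}$ (applied with $s=t-1$, $N=n-1$, $A=P_{n-1}$), one collapses the whole expression to
\[P_n(t)=\bigl(1+(2n-1)t\bigr)P_{n-1}(t)+2t(1-t)P_{n-1}'(t),\]
equivalently $2n\,\psi_n(t)=\bigl(1+(2n-1)t\bigr)\psi_{n-1}(t)+2t(1-t)\psi_{n-1}'(t)$. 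Reading off the coefficient of $t^m$ from the $P_n$-identity and simplifying gives precisely \eqref{v-recur}. The boundary condition is then immediate: the $m=0$ instance of \eqref{v-recur} reads $V(n,0)=V(n-1,0)$ (as $V(n-1,-1)=0$), and the empty-tableau case $\psi_0\equiv1$ gives $V(0,0)=1$, so $V(n,0)=1$ for all $n$.

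It remains to establish the symmetry $V(n,m)=V(n,n-m)$, and for this I would invoke the transpose symmetry of staircase tableaux recorded at the beginning of Section~\ref{proofs}: the map sending a tableau to its transpose with the $\alpha$'s and $\beta$'s (resp.\ the $\gamma$'s and $\delta$'s) interchanged is an involutive bijection of $\cS_n$ that fixes the diagonal set-wise and turns each $\alpha/\gamma$ on the diagonal into a $\beta/\delta$. Since no diagonal box is empty we have $A_n+B_n=n$, so this bijection carries $A_n$ to $n-A_n$; as $\P_n$ is uniform and hence invariant under it, $A_n\stackrel d=n-A_n$, i.e. $\psi_n(t)=t^n\psi_n(1/t)$, and comparing coefficients yields $V(n,m)=V(n,n-m)$. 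The only genuine obstacle is the middle paragraph: re-expanding \eqref{psit-1} gives nothing by itself, so the recurrence \eqref{v-recur} must actually be extracted from the Whitney recurrence, and the index shift together with the two polynomial identities are the one place where an arithmetic slip could creep in; everything else is a single binomial expansion or a direct appeal to facts already in the paper.
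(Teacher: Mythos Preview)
Your proof is correct, and in fact the derivation of the differential recurrence $P_n(t)=(1+(2n-1)t)P_{n-1}(t)+2t(1-t)P_{n-1}'(t)$ checks out: with $s=t-1$ the first sum becomes $(t-1)\bigl[(2n-1)P_{n-1}-2(t-1)P_{n-1}'\bigr]$, the shifted second sum becomes $2\bigl[nP_{n-1}-(t-1)P_{n-1}'\bigr]$, and adding gives exactly your identity; reading off $t^m$ then yields \eqref{v-recur} with no slip.

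The paper's argument is organised differently. For the recurrence the paper does not pass through a polynomial differential equation but instead substitutes the Whitney recurrence directly into the explicit sum \eqref{vnm} for $V(n,m)$ and matches the resulting expression against $(2m+1)V(n-1,m)+(2(n-m)+1)V(n-1,m-1)$ term by term, the key step being the binomial identity $m\binom{n-k-1}{m}=(n-m-k)\binom{n-k-1}{m-1}$. Your generating-function approach packages this same computation more cleanly and has the bonus of producing the identity $2n\,\psi_n=(1+(2n-1)t)\psi_{n-1}+2t(1-t)\psi_{n-1}'$, which is of independent interest. For the symmetry the paper observes that it follows by a one-line induction from the recurrence itself (replacing $m$ by $n-m$ in \eqref{v-recur} and using the inductive hypothesis), whereas you invoke the transpose involution of staircase tableaux; both are valid, but the paper's route keeps the proof of the proposition self-contained and purely algebraic. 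Correspondingly, the paper gets the boundary value from $V(n,n)=W(n,0)=1$ together with symmetry, while you get $V(n,0)=1$ directly from the $m=0$ case of the recurrence.
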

\begin{proof}
We start out with \eqref{psit-1}. To rewrite it in  powers of $t$, note that the $m$th derivative of $\psi_n$ at $t=0$ is
\begin{eqnarray*}\psi_n^{(m)}(0)&=&\frac1{2^nn!}\sum_{k=0}^{n-m}2^kk!W(n,k)(n-k)\cdot\dots\cdot(n-k-(m-1))(-1)^{n-k-m}\\
&=&\frac1{2^nn!}\sum_{k=0}^{n-m}2^kk!W(n,k)\frac{(n-k)!}{(n-k-m)!}(-1)^{n-k-m}.\end{eqnarray*}
Therefore, 
\[\psi_n(t)=\sum_{m=0}^n\frac{\psi_n^{(m)}(0)}{m!}t^m=
\frac1{2^nn!}\sum_{m=0}^nV(n,m)t^m\]
which shows that 
\eqref{gf_An} holds with $V(n,m)$ given by \eqref{vnm}.

It remains to verify the claimed properties  of the numbers $V(n,m)$, $0\le m\le n$. 
Since the symmetry condition follows by induction from the recurrence 
it suffices to verify the recurrence and the boundary condition. For the boundary condition, we see immediately that for $n\ge0$ $V(n,n)=W(n,0)=1$, so that once we verify the recurrence (and thus also the symmetry) we will have that $V(n,0)=1$ for all $n\ge0$.
To verify \eqref{v-recur}, we use the basic recurrence for $W(n,k)$'s to write the left--hand side of \eqref{v-recur} as
\begin{eqnarray*}
V(n,m)&=&\sum_{k=0}^{n-m}2^kk!\Big((2k+1)W(n-1,k)+W(n-1,k-1)\Big){n-k\choose m}(-1)^{n-m-k}\\&=&
\sum_{k=0}^{n-m}2^kk!(2k+1)W(n-1,k){n-k\choose m}(-1)^{n-m-k}\\ && \quad+
\sum_{k=1}^{n-m}2^kk!W(n-1,k-1){n-k\choose m}(-1)^{n-m-k}\\ &=& 
\sum_{k=0}^{n-m}2^kk!(2k+1)W(n-1,k){n-k\choose m}(-1)^{n-m-k}\\ && \quad+
\sum_{k=0}^{n-m-1}2^{k+1}(k+1)!W(n-1,k){n-k-1\choose m}(-1)^{n-m-k-1}\\ &=&
2^{n-m}(n-m)!(2(n-m)+1)W(n-1,n-m)\\&&\quad+
\sum_{k=0}^{n-m-1}2^kk!(2k+1)W(n-1,k){n-k\choose m}(-1)^{n-m-k}
\\&&\quad-
\sum_{k=0}^{n-m-1}2^kk!(2(k+1))W(n-1,k){n-k-1\choose m}(-1)^{n-m-k}.
\end{eqnarray*}
On the other hand, the right--hand side of \eqref{v-recur} is
\begin{eqnarray*}&&(2m+1)\sum_{k=0}^{n-m-1}2^kk!W(n-1,k){n-1-k\choose m}(-1)^{n-m-k-1}\\&&\quad
+
(2(n-m)+1)\sum_{k=0}^{n-m}2^kk!W(n-1,k){n-k-1\choose m-1}(-1)^{n-m-k}
\\&=&
(2m+1)\sum_{k=0}^{n-m-1}2^kk!W(n-1,k){n-k-1\choose m}(-1)^{n-m-k-1}
\\&&\quad+
(2(n-m)+1)\sum_{k=0}^{n-m-1}2^kk!W(n-1,k){n-k-1\choose m-1}(-1)^{n-m-k}
\\&&\quad+
(2(n-m)+1)2^{n-m}(n-m)!W(n-1,n-m).
\end{eqnarray*}
So, we see that  the coefficients in front of $W(n-1,n-m)$ in both expressions are the same, and to complete the verification of \eqref{v-recur} we need to see that the coefficients are the same for the remaining values of $k$, $0\le k\le n-m-1$. Cancelling the common factors, we need to see that
\begin{eqnarray*}&&(2k+1){n-k\choose m}-2(k+1){n-k-1\choose m}\\&&\quad=
(2(n-m)+1){n-k-1\choose m-1}-(2m+1){n-k-1\choose m}.
\end{eqnarray*}
But that is straightforward: using ${n-k\choose m}={n-k-1\choose m}+{n-k-1\choose m-1}$ and grouping the terms this boils down to verifying that
\[
{n-k-1\choose m}(2k+1-2(k+1)+2m+1)=
{n-k-1\choose m-1}(2(n-m)+1-2k-1),
\]
or, equivalently, that
\[
m{n-k-1\choose m}
=(n-m-k){n-k-1\choose m-1},
\]
which follows immediately from  the defining property of the binomial coefficients.
\end{proof}

\noindent{\bf Remark:} Triangle of numbers $V(n,m)$, $0\le m\le n$ is featured in the Online Encyclopedia of Integer Sequences \cite{s} as a sequence A060187  (with the shift in indexing: $V(n,m)=T(n+1,m+1)$) and is called ''Eulerian numbers of type B''. This sequence can be traced back in the literature to MacMahon's paper \cite{macmahon} and it was subsequently studied 
 in more detail in \cite[Sec. 3.2]{pyr} as a sequence $B_{n,k}(1)$. In particular, it appears that  the expression for a bivariate generating function of $(V(n,k))$  was derived for the first time in  \cite{pyr}. We use this expression in the next section to derive the asymptotic normality of $(A_n)$. 

\subsubsection{Conclusion of the proof by Bender's theorem}\label{subsec:bender}
Using the properties of the numbers $V(n,m)$  given in Proposition~\ref{prop:gfr} (and the identification with the sequence A060187 from \cite{s}) we can complete the proof of Theorem~\ref{thm:diag} by establishing  \eqref{clt_diag}.
To do that we will rely on a general   theorem due to  Bender \cite[Theorem~1]{b}. (Bender result is also described in \cite{fs}; see Section~IX.6 in general, and Theorem~IX.9, Example~IX.12, and Proposition~IX.9 in particular). Recall from \cite{s} or  \cite[Formula~(3.23)]{pyr} (and see Section~3 of \cite{pyr}  for a proof) that the bivariate generating function of the numbers $(V(n,k))$, called in \cite{pyr} $(B_{n,k}(1))$, is
\[\sum_{n\ge0}\sum_{k=0}^nV(n,k)\frac{w^kz^n}{n!}=\frac{(1-w)e^{(1-w)z}}{1-we^{2(1-w)z}}.\]
Therefore, it follows from \eqref{gf_An} that the bivariate probability generating function of the sequence $(A_n)$ is 
\[f(z,w):=\sum_{n\ge0}\psi_n(z)z^n
=\sum_{n\ge0}\sum_{k=0}^nV(n,k)\frac{w^k}{n!}\left(\frac z2\right)^n=\frac{(1-w)e^{(1-w)z/2}}{1-we^{(1-w)z}},\]
where we  define $f(z,1)=1/(1-z)$. We now closely follow the way Bender applied his result. First, 
\[f(z,e^s)=\frac{(1-e^s)e^{(1-e^s)z/2}}{1-e^se^{(1-e^s)z}}\]
has a simple pole at $z=r(s)=s/(e^s-1)$. Furthermore,
\[e^{(1-e^s)z}=e^{(1-e^s)(z-r(s))}e^{(1-e^s)r(s)}=
e^{(1-e^s)(z-r(s))}e^{-s},
\]
so that
\[f(z,e^s)=\frac{(1-e^s)e^{-s/2}e^{(1-e^s)(z-r(s))/2}}{1-e^{(1-e^s)(z-r(s))}}
=\frac{(1-e^s)e^{-s/2}}{e^{(e^s-1)(z-r(s))/2}-e^{-(e^s-1)(z-r(s))/2}}.
\]
 Since for bounded $u$
 \[\frac1{e^u-e^{-u}}=\frac1{2u}+O(1),\]
we get
\[f(z,e^s)=\frac{(1-e^s)e^{-s/2}}{(e^s-1)(z-r(s))}+O(1),\]
with the constant in $O(1)$ bounded when both $s$ and $z-r(s)$ are close to 0. Thus, by a comment at the very beginning of Section~3 of \cite{b}, the conditions of Theorem~1 of that paper are satisfied, and hence the central limit theorem holds (with centering by $\E A_n\sim n/2$ and scaling by $\sqrt{\operatorname{var}(A_n)}\sim\sqrt{n/12}$ as implied by \eqref{exp_var_diag}). Alternatively,  we see that
\[r(0)=1,\quad r'(0)=-\frac12,\quad\mbox{and}\quad r''(0)=\frac16,\]
so that  by \cite[Theorem~1]{b}  $\E A_n\sim\frac n2$ and $\operatorname{var}(A_n)\sim\frac n{12}$ which conforms to what we have found in \eqref{exp_var_diag}. In any event, \eqref{clt_diag} follows.

\section{Conclusion and further remarks}

In this paper we have developed a probabilistic approach to the analysis of properties of random staircase tableaux. Using this approach we established the asymptotic normality of several parameters associated with appearances of Greek letters $\alpha$, $\beta$, $\gamma$, and $\delta$ in a randomly chosen  tableau. We certainly hope that this approach will be useful in the analysis of other properties of staircase tableaux. From the combinatorial point of view, it would be of interest to analyse the number of appearances of the letter q in such tableaux. It is not clear at this point, that the method we develop is adequate to address that question, and if so, how difficult it would be to achieve. This is probably an issue worth resolving in the future. The fact that our method can be used to give new and rather complete results concerning  Greek letters and the fact that it gives an easy way of enumerating of staircase tableaux of a given size makes us cautiously optimistic.

It is perhaps worth making the following point. In our arguments we relied on observations like \eqref{eq:rplusdel} and symmetries of staircase tableaux
 to minimize the amount of work.  We wish to  emphasize, however,  that our probabilistic approach does provide a unified and systematic way of analyzing each of the statistics we considered  in a self--contained manner (i.e. not relying on relations between various parameters).  In fact, a direct proof that $\G_n$ satisfies \eqref{exact_D}, \eqref{exp_var_D}, and \eqref{dist_D} was given in \cite{dh}.  This is partly a reason we believe that our approach  has  potential of being useful in addressing other questions concerning statistics of staircase tableaux. To reiterate that point we wish  to briefly sketch a proof of \eqref{eq_diag} in Theorem~\ref{thm:diag} not relying on symmetries of the parameters.  

\subsection{The number of $\beta/\delta$ on the diagonal}
We show  that $A_n$ and $B_n$ have identical probability generating functions which, of course, implies \eqref{eq_diag}. Write $B_n=\sum_{j=1}^nb_j$, where $b_j$ is $1$ if we put a $\b$ or $\delta$ in the $j$th box on the diagonal (counting SW from the top) and is $0$ otherwise. 
We first derive the expression for the conditional probability generating function:  For $z,t$ complex,
\begin{eqnarray*}\E(t^{b_n}z^{r_n}|\mathcal D_{n-1})&=&\E(t^{b_n}z^{r_n}I_{b_n=1}|\mathcal D_{n-1})
+\E(t^{b_n}z^{r_n}I_{b_n=0}|\mathcal D_{n-1})\\&
=&t\E(z^{r_n}I_{b_n=1}|\mathcal D_{n-1})+z^{r_{n-1}+1}P(b_n=0|\mathcal D_{n-1})\\&
=&t\E(z^{r_n}|\mathcal D_{n-1})-
t\E(z^{r_n}I_{b_n=0}|\mathcal D_{n-1})+z^{r_{n-1}+1}\frac1{2\cdot3^{r_{n-1}}}
\\&=&
t\frac{z+1}2\left(\frac{z+2}3\right)^{r_{n-1}}
-
tz^{r_{n-1}}\frac1{2\cdot3^{r_{n-1}}}
+\frac z2\left(\frac z3\right)^{r_{n-1}}
\\&=&
t\frac{z+1}2\left(\frac{z+2}3\right)^{r_{n-1}}+
(1-t)\frac z2\left(\frac z3\right)^{r_{n-1}}
.
\end{eqnarray*}
This gives an expression for the bivariate probability generating function for $r_n$ and $B_n$ and a recurrence for the coefficients: 
\begin{eqnarray*}\E_nt^{B_n}z^{r_n}&=&\E_nt^{B_{n-1}}\left\{t\frac{z+1}2\left(\frac{z+2}3\right)^{r_{n-1}}+
(1-t)\frac z2\left(\frac z3\right)^{r_{n-1}}\right\}
\\&=&\frac1{2n}
\left\{t(z+1)\E_{n-1}t^{B_{n-1}}(z+2)^{r_{n-1}}+
(1-t)zE_{n-1}t^{B_{n-1}}z^{r_{n-1}}\right\}\\&=&
\frac1{2n2(n-1)\cdot\dots\cdot2(n-m+1)}\left\{\sum_{\ell=0}^mb_{m,\ell}\E_{n-m}t^{B_{n-m}}(z+2\ell)^{r_{n-m}}\right\}.
\end{eqnarray*}
Now,
\begin{eqnarray*}\E_{n-m}t^{B_{n-m}}(z+2\ell)^{r_{n-m}}&=&\frac1{2(n-m)}\Big\{t(z+2\ell+1)\E_{n-m-1}t^{B_{n-m-1}}(z+2\ell+1)^{r_{n-m-1}}\\&&\quad+(1-t)(z+2\ell)\E_{n-m-1}t^{B_{n-m-1}}(z+2\ell)^{r_{n-m-1}}\Big\},
\end{eqnarray*}
so that 
\begin{eqnarray*}b_{m,\ell}\E_{n-m}t^{B_{n-m}}(z+2\ell)^{r_{n-m}}&=&t(z+2\ell+1)
b_{m,\ell}\E_{n-m-1}t^{B_{n-m-1}}(z+2(\ell+1))^{r_{n-m-1}}\\&&\quad+(1-t)(z+2\ell)b_{m,\ell}E_{n-m-1}t^{B_{n-m-1}}(z+2\ell)^{r_{n-m-1}},
\end{eqnarray*}
which means that the coefficient in front of 
\[\E_{n-(m+1)}t^{B_{n-(m+1)}}(z+2\ell)^{r_{n-(m+1)}}\]
is
\[(1-t)(z+2\ell)b_{m,\ell}+t(z+2(\ell-1)+1)b_{m,\ell-1}.
\]
So, if we write $b_{m,\ell}=a_{m,\ell}t^\ell(1-t)^{m-\ell}$ we see that the coefficients $a_{m,\ell}=a_{m,\ell}(z)$ satisfy the  recurrence
\[a_{m+1,\ell}=(z+2\ell)a_{m,\ell}+(z+2\ell-1)a_{m,\ell-1},\]
with the initial condition 
\[a_{0,0}=1,\quad\mbox{and}\quad a_{m,\ell}=0,\quad\mbox{for}\quad \ell>m.
\]
 This is exactly the same recurrence as for the sequence $(c_{n,k})$ defined in the last section and thus 
\begin{equation}\label{gf_B}\E_nt^{B_n}z^{r_n}=\frac1{2^{n}n!}\sum_{k=0}^{n
}c_{n,k}t^k(1-t)^{n-k}
.
\end{equation}
In particular, putting $z=1$, differentiating with respect to $t$, evaluating at $t=1$,  and using $c_{n,n}=2^nn!$ and $c_{n,n-1}=2^{n-1}n!n$ we confirm that 
\[\E_nB_n=\frac1{2^nn!}(-c_{n,n-1}+nc_{n,n})=\frac1{2^nn!}(-2^{n-1}n!n+n2^nn!)=\frac n2.\]
Finally to see that $B_n$ and $A_n$ have, in fact,  the same distribution, put $z=1$ in \eqref{gf_B} and expand $(1-t)^{n-k}$ to get
\begin{eqnarray*}\sum_{k=0}^nc_{n,k}t^k(1-t)^{n-k}&=&\sum_{k=0}^nc_{n,k}t^k\sum_{j=0}^{n-k}{n-k\choose j}(-1)^{n-k-j}t^{n-k-j}\\&=&
\sum_{k=0}^n\sum_{j=0}^{n-k}t^{n-j}c_{n,k}{n-k\choose j}(-1)^{n-k-j}\\&=&
\sum_{m=0}^nt^m\left\{\sum_{k=0}^mc_{n,k}{n-k\choose n-m}(-1)^{m-k}\right\}
\\&=&
\sum_{m=0}^nt^m\left\{\sum_{k=0}^m2^kk!W(n,k){n-k\choose n-m}(-1)^{m-k}\right\}.\end{eqnarray*}
Now recall that by symmetry
\begin{eqnarray*}V(n,m)&=&V(n,n-m)=\sum_{k=0}^{n-(n-m)}2^kk!W(n,k){n-k\choose n-m}(-1)^{n-k-(n-m)}
\\&=&\sum_{k=0}^m2^kk!W(n,k){n-k\choose n-m}(-1)^{m-k},
\end{eqnarray*}
so that 
\[\E_nt^{B_n}=\frac1{2^nn!}\sum_{k=0}^nc_{n,k}t^k(1-t)^{n-k}=\frac1{2^nn!}\sum_{m=0}^nV(n,m)t^m,\]
which is exactly the same as the generating function of $A_n$ as claimed. 


\end{document}